\newtheorem{theorem}{Theorem}[section]
\newtheorem{lemma}[theorem]{Lemma}
\theoremstyle{definition}
\newtheorem{note}[theorem]{Note}
\newcommand{\be}{\begin{equation}}
\newcommand{\ee}{\end{equation}}
\newcommand{\ol}{\overline}
\newcommand{\R}{\mathbf{R}}
\newcommand{\C}{\mathcal{C}}
\newcommand{\G}{\mathcal{G}}
\newcommand{\M}{\mathcal{M}}
\renewcommand{\epsilon}{\varepsilon}
\DeclareFontFamily{U}{tipa}{}
\DeclareFontShape{U}{tipa}{m}{n}{<->tipa10}{}
\newcommand{\arc@char}{{\usefont{U}{tipa}{m}{n}\symbol{62}}}%
\newcommand{\arc}[1]{\mathpalette\arc@arc{#1}}
\newcommand{\arc@arc}[2]{%
  \sbox0{$\m@th#1#2$}%
  \vbox{
    \hbox{\resizebox{\wd0}{\height}{\arc@char}}
    \nointerlineskip
    \box0
  }%
}
\def\@tocline#1#2#3#4#5#6#7{\relax
  \ifnum #1>\c@tocdepth 
  \else
    \par \addpenalty\@secpenalty\addvspace{#2}%
    \begingroup \hyphenpenalty\@M
    \@ifempty{#4}{%
      \@tempdima\csname r@tocindent\number#1\endcsname\relax
    }{%
      \@tempdima#4\relax
    }%
    \parindent\z@ \leftskip#3\relax \advance\leftskip\@tempdima\relax
    \rightskip\@pnumwidth plus4em \parfillskip-\@pnumwidth
    #5\leavevmode\hskip-\@tempdima
      \ifcase #1
       \or\or \hskip 1.3em \or \hskip 2em \else \hskip 5em \fi%
      #6\nobreak\relax
    \hfill\hbox to\@pnumwidth{\@tocpagenum{#7}}\par
    \nobreak
    \endgroup
  \fi}
\newcommand{\nocontentsline}[3]{}
\newcommand{\tocless}[2]{\bgroup\let\addcontentsline=\nocontentsline#1{#2}\egroup}
\begin{document}
\setlength{\baselineskip}{1.2\baselineskip}

\title[Minkowski inequality in Cartan-Hadamard manifolds] 
{Minkowski inequality in Cartan-Hadamard manifolds}

\author{Mohammad Ghomi}
\address{School of Mathematics, Georgia Institute of Technology,
Atlanta, GA 30332}
\email{ghomi@math.gatech.edu}
\urladdr{www.math.gatech.edu/~ghomi}

\author{Joel Spruck}
\address{Department of Mathematics, Johns Hopkins University,
 Baltimore, MD 21218}
\email{js@math.jhu.edu}
\urladdr{www.math.jhu.edu/~js}

\vspace*{-0.75in}
\begin{abstract}
Using harmonic mean curvature flow,  we establish a sharp Minkowski type lower bound for total mean curvature of convex surfaces with a given area in Cartan-Hadamard $3$-manifolds.  This inequality also improves the known estimates for total mean curvature in hyperbolic $3$-space. As an application, we obtain a Bonnesen-style  isoperimetric inequality for  surfaces with convex distance function in nonpositively curved $3$-spaces, via monotonicity results for total mean curvature. This connection between the Minkowski and isoperimetric inequalities is extended to
Cartan-Hadamard manifolds of any dimension.
\end{abstract}

\date{\today \,(Last Typeset)}
\subjclass[2010]{Primary: 53C20, 58J05; Secondary: 52A38, 49Q15.}
\keywords{Nonpositive curvature, Hyperbolic space,  Harmonic mean curvature flow, Total mean curvature, Alenxandrov-Fenchel inequality, Bonnesen-style isoperimetric inequality.}
\thanks{The research of M.G. was supported by NSF grant DMS-2202337 and a Simons Fellowship. The research of J.S. was supported by a Simons Collaboration Grant}

\maketitle


\section{Introduction}\label{sec:minkowski}
Complete simply connected Riemannian spaces of nonpositive  curvature,  or \emph{Cartan-Hadamard manifolds}, form a natural generalization of Euclidean and hyperbolic spaces. A \emph{strictly convex} hypersurface $\Gamma$ of a Cartan-Hadamard space $M$ is a closed  embedded submanifold of codimension one which,  when properly oriented, 
has positive definite second fundamental form $\mathrm{I\!I}_\Gamma$. The \emph{mean curvature}  of $\Gamma$ is  then given by $H:=\textup{trace}(\mathrm{I\!I}_\Gamma)$, and its \emph{total mean curvature} is defined as $
\M(\Gamma):=\int_\Gamma Hd\mu$.  A celebrated  result of Minkowski  \cite{minkowski1903} states that  in Euclidean space $\R^3$
\be\label{eq:minkowski0}
\M(\Gamma)\geq \sqrt{16\pi |\Gamma|},
\ee
where $|\Gamma|$ denotes the area of $\Gamma$, and equality holds only when $\Gamma$ is a sphere. Extension of this inequality to hyperbolic space $\textbf{H}^3$ has been a long standing problem \cite{santalo1963}, which has been intensively studied  \cites{gallego-solanes,natario2015}, specially with the aid of curvature flows \cites{wang-xia2014,ge-wang-wu 2014,andrews-hu-li-2020,scheuer2020} in recent years; however, the sharp inequality remains unknown. Here we generalize Minkowski's inequality to Cartan-Hadamard manifolds via harmonic mean curvature flow. By \emph{smooth} we mean $\C^\infty$, \emph{curvature} means sectional curvature unless specified otherwise, and a \emph{domain} is a connected open set with compact closure.

\begin{theorem}\label{thm:minkowski1}
Let $\Gamma$ be a smooth strictly convex surface in a Cartan-Hadamard 3-manifold $M$ with curvature $K\leq a\leq 0$. Then 
\be\label{eq:minkowski1}
\M(\Gamma)\geq \sqrt{16\pi|\Gamma|-2a|\Gamma|^2},
\ee
with equality only if the domain bounded by $\Gamma$ is isometric to a ball in $\R^3$.
\end{theorem}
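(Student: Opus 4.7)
The plan is to deform $\Gamma$ by harmonic mean curvature flow and show that
\[
Q(t) := \M(\Gamma_t)^2 - 16\pi|\Gamma_t| + 2a|\Gamma_t|^2
\]
is monotone non-increasing, with $Q(T)=0$ at the collapse time $T$; this immediately gives $Q(0)\geq 0$, which is \eqref{eq:minkowski1}. The flow of choice is $\partial_t X = -(K_e/H)\nu$, where $K_e := \kappa_1\kappa_2$ is the extrinsic Gauss curvature, $H$ the mean curvature, and $\nu$ the outward unit normal. This flow is natural for the problem because area evolves as
\[
\frac{d}{dt}|\Gamma_t| = -\int_{\Gamma_t} K_e\, d\mu_t,
\]
and Gauss--Bonnet ties the right-hand side directly to the ambient curvature bound $a$.

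First I would establish the PDE package for the flow: short-time existence, preservation of strict convexity via the maximum principle applied to the principal curvatures, and contraction to a point $p\in M$ in finite time with rescaled limit a round Euclidean sphere. The Hessian and Bishop comparison theorems on a Cartan--Hadamard manifold should provide the a priori bounds needed to control the ambient Ricci and $\ol R_{\nu i\nu j}$ terms appearing in the evolution of $\mathrm{I\!I}$. I expect this analytic step to be the main obstacle: in Euclidean space (Andrews, Caffarelli--Nirenberg--Spruck) it is classical, but in variable nonpositive curvature one needs curvature pinching strong enough that the singular blowup at $T$ is modeled on the shrinking sphere of $\R^3$.

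With the flow in hand, monotonicity of $Q$ should follow from a short computation. The Gauss equation together with Gauss--Bonnet on the convex leaf $\Gamma_t$ gives
\[
\int_{\Gamma_t} K_e\, d\mu_t = 4\pi - \int_{\Gamma_t} \ol K(T\Gamma_t)\, d\mu_t \;\geq\; 4\pi - a|\Gamma_t|,
\]
using $\ol K \leq a$. The standard variation formula for $H$ under inward normal speed $f = K_e/H$, combined with $|\mathrm{I\!I}|^2 = H^2 - 2K_e$ and $\mathrm{Ric}(\nu,\nu)\leq 2a$, yields (the $\Delta f$ term integrates to zero)
\[
\frac{d}{dt}\M(\Gamma_t) \;\leq\; -2\int \frac{K_e^2}{H}\,d\mu_t + 2a\int \frac{K_e}{H}\,d\mu_t.
\]
Feeding these into $Q'(t)$ along with the Cauchy--Schwarz estimate
\[
\Big(\int K_e\,d\mu_t\Big)^{\!2} \;\leq\; \M(\Gamma_t)\int \frac{K_e^2}{H}\,d\mu_t,
\]
one verifies that the correction $2a|\Gamma_t|^2$ in $Q$ is precisely the coefficient needed for the Gauss--Bonnet defect $-a|\Gamma_t|$ to balance the cross terms, producing $Q'(t)\leq 0$. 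As $t\nearrow T$ the surface becomes asymptotically a small round sphere in a nearly flat region, so $Q(T)=0$ and \eqref{eq:minkowski1} follows.

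For the rigidity statement, $Q(0)=0$ forces $Q\equiv 0$ along the flow, which in turn forces equality in each of the three sources above: equality in Cauchy--Schwarz ($K_e$ proportional to $H$ pointwise), in $\ol K\leq a$ along $T\Gamma_t$, and in $\mathrm{Ric}(\nu,\nu)\leq 2a$. Together these pin the ambient metric inside $\Gamma$ to constant sectional curvature $a$ and each $\Gamma_t$ to a geodesic sphere there; substituting a geodesic sphere of constant curvature $a$ back into the would-be equality in \eqref{eq:minkowski1} then forces $a=0$, so the enclosed domain is isometric to a Euclidean ball.
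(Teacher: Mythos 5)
Your proposal is in essence the paper's own proof: harmonic mean curvature flow $F=G/H$, the same quantity $Q(t)=\M_t^2-16\pi|\Gamma_t|+2a|\Gamma_t|^2$ (the paper's $\phi$), monotonicity via the $H$-evolution formula, $|\mathrm{I\!I}|^2=H^2-2G$, $\textup{Ric}(\nu)\le 2a$, Cauchy--Schwarz, and Gauss--Bonnet combined with $|\Gamma_t|'=-\G_t$, followed by the same equality analysis (Cauchy--Schwarz equality gives geodesic spheres, Ricci equality plus nonpositive curvature gives flatness). The only place you diverge is in treating the ``PDE package'' as the main open obstacle: the paper simply invokes Xu's theorem (arXiv, 2010; see also Gulliver--Xu), which already gives long-time existence, preserved strict convexity, and contraction to a point for HMCF of any smooth strictly convex hypersurface in a Cartan--Hadamard manifold, so no new analytic work is required there.
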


Inequality \eqref{eq:minkowski1} appears to be new even in hyperbolic space $\mathbf{H}^3(a)$ of constant curvature $a<0$ \cite[p. 109]{natario2015}, where the previous best estimate was $\M(\Gamma)\geq \sqrt{-a}\,|\Gamma|$ by Gallego-Solanes \cites{gallego-solanes, brendle-wang} (note that in \cite{gallego-solanes}, $H:=\textup{trace}(\mathrm{I\!I}_\Gamma)/(n-1)$).
Santalo asked \cite{santalo1963}, see  \cite[p. 78]{santalo2009}, whether the sharp inequality in $\mathbf{H}^3(a)$ could be
\be\label{eq:minkowski2}
\M(\Gamma)\geq \sqrt{16\pi|\Gamma|-4a|\Gamma|^2},
\ee
as the lower bound would then correspond to the total mean curvature of a sphere with the same area as $\Gamma$; however, an example by Naveira-Solanes  \cite[p. 815]{santalo2009}, see  \cite[p. 109]{natario2015}, shows that \eqref{eq:minkowski2} cannot in general hold. In Note \ref{note:NS}  we will analyze this example to show that \eqref{eq:minkowski1} is not far from optimal. Under the additional hypothesis that $\Gamma$ is \emph{$h$-convex} (or \emph{horo-convex}), i.e., supported at each point by a horosphere,  \eqref{eq:minkowski2} does hold in $\mathbf{H}^3(a)$ \cites{wang-xia2014,ge-wang-wu 2014}. In Note \ref{note:h-convex}  we will discuss a possible improvement of \eqref{eq:minkowski1} in the $h$-convex case, and in Theorem \ref{thm:minkowski2} we extend \eqref{eq:minkowski1} to nonsmooth  surfaces.

Since total mean curvature is the first variation of area, Minkowski's inequality is closely related to isoperimetric problems in Euclidean space \cite[Sec. 7.2]{schneider2014} \cite[p. 1191]{osserman1978}. Here we apply the extension of \eqref{eq:minkowski1} to nonsmooth  surfaces, together with recent monotonicity results for mean curvature \cite{ghomi-spruck2023},   to establish an isoperimetric inequality in Cartan-Hadamard manifolds in the style of Bonnesen \cite{osserman1979}. This gives a refinement, for convex surfaces, of a theorem of Kleiner \cite{kleiner1992} who first generalized the isoperimetric inequality to $3$-dimensional Cartan-Hadamard manifolds. 
The \emph{inradius}, $\textup{inrad}(\Omega)$, of a domain $\Omega\subset M$ is the supremum of radii of spheres which are contained in $\Omega$. A closed embedded hypersurface $\Gamma$, bounding a domain $\Omega$ in a Cartan-Hadamard manifold, is \emph{$d$-convex} (or \emph{distance-convex}) provided that its distance function is convex on $\Omega$. This condition is weaker than $h$-convexity \cite[Sec. 3]{ghomi-spruck2022}. We let $|\Omega|$ denote volume of $\Omega$.

\begin{theorem}\label{thm:isoperimetric}
Let $\Gamma$ be a smooth  $d$-convex surface in a Cartan-Hadamard 3-manifold, and $\Omega$ be the domain bounded by $\Gamma$. Then
\be\label{eq:bonnesen}
|\Omega|\leq\frac{4\pi}{3}\left(\left(\frac{|\Gamma|}{4\pi}\right)^\frac{3}{2}-
\left(\sqrt{\frac{|\Gamma|}{4\pi}}-\textup{inrad}(\Omega)\right)^3\right),
\ee
with equality only if $\Omega$ is isometric to a ball in $\R^3$.
\end{theorem}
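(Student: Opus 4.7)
The plan is to foliate $\Omega$ by its inner parallel surfaces, apply the nonsmooth extension of Theorem~\ref{thm:minkowski1} (namely Theorem~\ref{thm:minkowski2}) slice by slice, and integrate the resulting differential inequality in the spirit of Bonnesen. Set $\rho:=\textup{inrad}(\Omega)$ and, for $t\in[0,\rho]$, define
\[
\Omega_t:=\{x\in\Omega\,:\,d(x,\Gamma)>t\},\qquad \Gamma_t:=\partial\Omega_t,
\]
so $\Omega_0=\Omega$, $\Gamma_0=\Gamma$, and $|\Omega_\rho|=0$. The $d$-convexity of $\Gamma$ should be inherited by each $\Gamma_t$ (a consequence of the convexity of the distance function), although smoothness may fail past the first singular time of the inward flow. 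Write $A(t):=|\Gamma_t|$ and $V(t):=|\Omega_t|$.

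By the coarea formula, $V'(t)=-A(t)$, while the mean-curvature monotonicity established in \cite{ghomi-spruck2023} should ensure that $A$ is absolutely continuous on $[0,\rho]$ with
\[
A'(t)\leq -\M(\Gamma_t)\quad\text{almost everywhere,}
\]
where $\M(\Gamma_t)$ is interpreted in the generalized sense appropriate to a (possibly nonsmooth) $d$-convex surface. Applying Theorem~\ref{thm:minkowski2} to $\Gamma_t$ and using $a\leq 0$ to discard the curvature term gives
\[
\M(\Gamma_t)\geq\sqrt{16\pi A(t)-2aA(t)^2}\geq\sqrt{16\pi A(t)},
\]
so $A'\leq -\sqrt{16\pi A}$. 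Integration of this elementary ODE yields
\[
\sqrt{A(t)}\leq\sqrt{|\Gamma|}-2\sqrt{\pi}\,t,\qquad t\in[0,\rho].
\]

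Now, using $V(\rho)=0$ and squaring the last estimate,
\[
|\Omega|=\int_0^\rho A(t)\,dt\leq\int_0^\rho\bigl(\sqrt{|\Gamma|}-2\sqrt{\pi}\,t\bigr)^2 dt=\frac{4\pi}{3}\bigl(r^3-(r-\rho)^3\bigr),
\]
where $r:=\sqrt{|\Gamma|/(4\pi)}$ and the final equality is a routine substitution $u=\sqrt{|\Gamma|}-2\sqrt{\pi}\,t$ together with the identity $\sqrt{|\Gamma|}=2\sqrt{\pi}\,r$. For the rigidity clause, equality throughout forces equality in Theorem~\ref{thm:minkowski1} for almost every $t\in[0,\rho]$, identifying each $\Omega_t$, and therefore $\Omega$ itself, with a Euclidean ball.

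The principal obstacle is the first step: justifying the a.e.\ differential bound $A'(t)\leq -\M(\Gamma_t)$ together with the propagation of $d$-convexity through the inner-parallel flow, so that Theorem~\ref{thm:minkowski2} applies uniformly in $t$. This is precisely what the monotonicity results of \cite{ghomi-spruck2023} and the nonsmooth version of the Minkowski inequality are tailored to provide; once they are in place the remainder of the argument is a one-line Gronwall-type integration.
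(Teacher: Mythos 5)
Your strategy is sound in spirit, and the final computation (from $A(t)\leq 4\pi(r-t)^2$ to the displayed volume bound) is exactly right; but you have correctly flagged the crux—the a.e.\ inequality $A'(t)\leq-\M(\Gamma_{-t})$—and that step is a genuine gap: it is not supplied by the monotonicity results you cite, and the paper's proof is designed precisely to avoid needing it.

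The paper instead shows $|\Gamma_{-t}|\leq|S_{-t}|$ (where $S$ is a Euclidean sphere of the same area as $\Gamma$) by contradiction via a Steiner-type inequality for outer parallel surfaces (Lemma~\ref{lem:steiner}): if $|\Gamma_{-t_0}|>|S_{-t_0}|$, then applying $|\Gamma|\geq|(\Gamma_{-t_0})_{t_0}|\geq|\Gamma_{-t_0}|+\M(\Gamma_{-t_0})t_0+\G(\Gamma_{-t_0})t_0^2$, together with $\M(\Gamma_{-t_0})>\M(S_{-t_0})$ from Theorem~\ref{thm:minkowski2} and $\G(\Gamma_{-t_0})\geq\G(S_{-t_0})=4\pi$ from Gauss--Bonnet, overshoots $|S|=|\Gamma|$. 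This route only ever differentiates outer parallel families (which are $\C^{1,1}$) and never needs a derivative of the inner parallel area $A(t)$, which can misbehave once $t$ exceeds $\textup{reach}(\Gamma)$.

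Your differential inequality is in fact true, and closable with the paper's own tools, but not for the reasons you give. What one needs is the outer-parallel sub-additivity $|(\Gamma_{-t})_s|\leq|\Gamma_{-(t-s)}|$ (this is Lemma~\ref{lem:reach} applied to $\Gamma_{-(t-s)}$; it is not a consequence of the mean-curvature monotonicity of \cite{ghomi-spruck2023}), combined with the one-sided area derivative $\lim_{s\to0^+}\bigl(|(\Gamma_{-t})_s|-|\Gamma_{-t}|\bigr)/s=\M(\Gamma_{-t})$, which follows from Lemma~\ref{lem:var-of-area} and the paper's definition of $\M$ for nonsmooth convex surfaces. Together these give, at every point where $A'$ exists, $A'(t)\leq-\M(\Gamma_{-t})$. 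Note also that absolute continuity of $A$ is neither proved nor needed: since $g=\sqrt{A}$ is monotone decreasing, Lebesgue's inequality $g(t)\leq g(0)+\int_0^t g'$ already suffices to integrate $g'\leq-2\sqrt{\pi}$ a.e.\ and obtain $\sqrt{A(t)}\leq\sqrt{|\Gamma|}-2\sqrt{\pi}\,t$. Finally, the rigidity clause needs the same care as in the paper: equality forces $\M(\Gamma)=\M(S)$ via the first-variation identity at $t=0$, after which Theorem~\ref{thm:minkowski1} (with $a=0$) identifies $\Omega$ with a Euclidean ball; the statement ``for almost every $t$'' is not by itself enough without passing to the boundary value $t=0$ where $\Gamma$ is assumed smooth.
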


The isoperimetric inequality has been established in Cartan-Hadamard manifolds only up to dimension $4$ \cites{weil1926, croke1984, kleiner1992}, and a Bonnesen-style inequality was also  established recently in dimension $2$ \cite{hoisington-mcgrath2022}. The \emph{Cartan-Hadamard conjecture} states that the isoperimetric inequality should hold in all dimensions  \cites{kloeckner-kuperberg2019,ghomi-spruck2022}.  Kleiner's approach to this problem was based on estimating the total Gauss-Kronecker curvature, which was further studied in \cite{ghomi-spruck2022}; see also \cite{schulze2008,choe-ritore2007} and \cite[Sec. 3.3.2]{ritore2010} for other proofs or variations in dimension 3. Proof of Theorem \ref{thm:isoperimetric} provides an alternative approach based on total mean curvature. An advantage of this approach is that mean curvature satisfies a  monotonicity property, see Lemma \ref{lem:monotonicity}, whereas  Gauss-Kronecker curvature does not \cite{dekster1981}.
In Section \ref{sec:high-dim} below we will show that this method may be deployed in any dimension, as stated in Theorem \ref{thm:higher}.

Minkowski's inequality in Euclidean space is a special case of Alexandrov-Fenchel inequalities for generalized mean curvatures of convex bodies, which may be proved via  Brunn-Minkowski theory of mixed volumes; see \cite[Thm. 7.2.1 \& Note 2, p. 387]{schneider2014}, and \cites{shenfeld-vanhandel2019,shenfeld-vanhandel2022} for more recent treatments. Differential geometric proofs using the isoperimetric inequality and Steiner formulas may be found in \cite{natario2015} and \cite[p. 201]{montiel-ros2009}. Ge-Wang-Wu \cite{ge-wang-wu2014} and Wang-Xia \cite{wang-xia2014} extended the inequality to $h$-convex surfaces in hyperbolic space via curvature flows; see also \cites{andrews-hu-li-2020, wei-xiong2015}. In addition, there has been substantial work on weakening the convexity condition \cites{dalphin2016,chang-wang2013,guan-li2009}, and extensions to other spaces \cites{brendle-wang,scheuer2021} due to applications in general relativity \cite{gibbons1997,lima-girao2016}.

\begin{note}\label{note:NS}
Here we examine the Naveira-Solanes example \cite{natario2015} mentioned above to estimate the optimality of \eqref{eq:minkowski1}.  This object is constructed by taking a 
disc $D$ of radius $r$ in a totally geodesic surface in hyperbolic space $\textbf{H}^3(a)$, and letting $\Gamma=\Gamma(\epsilon,r)$ be the outer parallel surface of $D$ at a small distance $\epsilon$. 
We seek the largest value of $\lambda$ so that
$$
\phi_\lambda(\Gamma):=\M(\Gamma)^2-16\pi|\Gamma|+\lambda a|\Gamma|^2
$$
is nonnegative for all $\Gamma$. As $\phi_\lambda$ is invariant under rescaling of the metric, we may assume  for convenience that $a=-1$. Then
 $$
\lim_{\epsilon\to 0} |\Gamma|= 2|D|=4\pi(\cosh(r)-1).
 $$
Note that $\Gamma$ consists of a pair of topological disks parallel to $D$ plus a half tube $T$ about \ $\partial D$. The mean curvature of the disks vanish as $\epsilon\to 0$. On the other hand, $|T|\to|\partial D|\pi\sinh(\epsilon)$ up to first order, since the full tube about $\partial D$ is fibrated by (geodesic) circles of radius $\epsilon$. So $\M(\Gamma)\to\partial |T|/\partial\epsilon=|\partial D|\pi\cosh(\epsilon)$. Thus
$$
\lim_{\epsilon\to 0}\M(\Gamma)=|\partial D|\pi
=2\pi^2\sinh(r),
$$ 
and we conclude that
$$
\lim_{\epsilon\to0} \phi_\lambda(\Gamma)=4\pi^4 \sinh^2(r)-64\pi^2(\cosh(r)-1)-16\pi^2\lambda(\cosh(r)-1)^2.
$$
Setting this quantity $\geq 0$ yields
$$
\lambda \leq \frac{\pi^2\sinh^2(r)-16(\cosh(r)-1)}{4(\cosh(r)-1)^2},
$$
which tends to $\pi^2/4$, as $r\to\infty$.  Together with Theorem \ref{thm:minkowski1}, this shows that if the optimal Minkowski's inequality in a Cartan-Hadamard $3$-space, with curvature $K\leq a\leq 0$, is of the form
$$
\M(\Gamma)\geq\sqrt{16\pi|\Gamma|-\lambda a|\Gamma|^2},
$$
then
$2\leq\lambda\leq \pi^2/4\simeq 2.47$. Hence the constant $2$ in \eqref{eq:minkowski1} is within \ $80\%$ of the largest value which it might possibly have.
\end{note}

\section{Smooth Strictly Convex Surfaces}\label{sec:smooth}
Here we prove Theorem \ref{thm:minkowski1} via harmonic mean curvature flow.
 A \emph{geometric flow}  of a hypersurface $\Gamma$ in a Riemannian $n$-manifold $M$ \cite{andrews-chow2020,giga2006,huisken-polden1999} is  a one parameter family of immersions $X\colon\Gamma\times[0,T)\to M$, $X_t(\cdot):=X(\cdot, t)$,  given by
\be\label{eq:hmcf}
X_t'(p)=-F_t(p)\nu_t(p),\quad\quad\quad X_0(p)=p,
\ee
where $(\cdot)':=\partial/\partial t(\cdot)$,
$\nu_t$ is a  normal vector field along $\Gamma_t:=X_t(\Gamma)$, and the \emph{speed function} $F_t$ depends on \emph{principal curvatures} or eigenvalues $\kappa_i^t$  of the second fundamental form $\mathrm{I\!I}_t:=\mathrm{I\!I}_{\Gamma_t}$. More precisely, $\nu_t(p)$ is the normal and $\kappa_i^t(p)$ are the principal curvatures of $\Gamma_t$ at the point $X_t(p)$.
 When $F_t$ is the harmonic mean of $\kappa_i^t$, i.e., 
$$
F_t=\left(\sum\frac{1}{\kappa_i^t}\right)^{-1},
$$
$X$ is called the \emph{harmonic mean curvature flow} of $\Gamma$. In particular when $n=3$,
$$
F_t=\frac{G_t}{H_t},
$$
 where $G_t:=\det(\mathrm{I\!I}_t)$ and $H_t:=\textup{trace}(\mathrm{I\!I}_t)$ are the \emph{Gauss-Kronecker curvature} and \emph{mean curvature} of $\Gamma_t$ respectively.
 Xu showed that \cite[Thm. 1.2]{xu2010, gulliver-xu2009} when $\Gamma$ is a smooth strictly convex hypersurface in a Cartan-Hadamard manifold $M$ and $F_t$ is the harmonic mean curvature, $X$ exists for $t\in [0,T)$, is $\C^\infty$, and $\Gamma_t$ are strictly convex hypersurfaces
converging to a point as $t\to T$. This is the only geometric flow known to preserve the convexity of a hypersurface in $M$ while contracting it to a point.

\begin{proof}[Proof of Theorem \ref{thm:minkowski1}]
Let $\Gamma_t$, $t\in[0,T)$, be the surfaces generated by the harmonic mean curvature flow of $\Gamma$, converging to a point $o$ in $M$. Set $\M_t:=\M(\Gamma_t)$, and 
$$
\phi(t):=\M_t^2-16\pi|\Gamma_t|+2a|\Gamma_t|^2.
$$
We need to show that $\phi(0)\geq 0$. To this end, we compute $\phi'$ as follows. 
It is well-known that \cite[Thm. 3.2(v)]{huisken-polden1999} for any geometric flow
$$
(H_t)' = 
\Delta_t F_t+\left(|\mathrm{I\!I}_t|^2+\textup{Ric}(\nu_t)\right)F_t,
$$
where $|\mathrm{I\!I}_t|:=\sqrt{\sum(\kappa_i^t)^2}$, $\Delta_t$ is the Laplace-Beltrami operator induced on $\Gamma$ by $X_t$, and $\textup{Ric}(\nu_t)$ is the Ricci curvature of $M$ at the point $X_t(p)$ in the direction of $\nu_t(p)$, i.e., the sum of sectional curvatures of $M$ with respect to a pair of orthogonal planes which contain $\nu_t(p)$. Let $d\mu_t$ be the area element induced on $\Gamma$ by $X_t$.
 By \cite[Lem. 7.4]{huisken-polden1999},
$$
(d\mu_t)'=-F_tH_td\mu_t=-G_t d\mu_t.
$$
Using the above formulas, we compute that
\begin{eqnarray}\label{eq:G'}\notag
\M'_t&=&\int_{\Gamma}\left((H_t)'d\mu_t+H_t(d\mu_t)'\right)\\ 
&=&
\int_{\Gamma}\Big(\Delta_t F_t+\big(|\mathrm{I\!I}_t|^2-(H_t)^2\big)F_t +\textup{Ric}(\nu_t)F_t\Big)d\mu_t\\ \notag
&\leq&
-2\int_{\Gamma}(G_t -a)\frac{G_t}{H_t}d\mu_t\\ \notag
&\leq&
-2\int_{\Gamma}\frac{(G_t)^2}{H_t}\,d\mu_t.
\end{eqnarray}
So, by Cauchy-Schwarz inequality, 
\begin{eqnarray}\label{eq:CS}
\M_t\M'_t
\leq
-2\M_t\int_{\Gamma}\frac{(G_t)^2}{H_t}d\mu_t
\leq
-2\G_t^2,
\end{eqnarray}
where $\G_t=\G(\Gamma_t):=\int_{\Gamma}G_t d\mu_t$ is the \emph{total Gauss-Kronecker curvature} of $\Gamma_t$.
Let $H$ be the function on $\Omega\setminus\{o\}$ given by $H(X_t(p)):=H_t(p)$. Also 
define $u$ on $\Omega\setminus\{o\}$ by  $u(X_t(p))=t$, which yields that $|\nabla u(X_t)|=1/F_t$. Then $H=\textup{div}(\nabla u/|\nabla u|)$, and Stokes' theorem together with the coarea formula yields that
$$
|\Gamma_t|-|\Gamma_{t+h}|=\int_{\Omega_t\setminus\Omega_{t+h}}H
=
\int_t^{t+h}\left(\int_{\Gamma}H_sF_s\,d\mu_s\right)ds
=
\int_t^{t+h}\G_sds,
$$
where $\Omega_t$ is the convex domain bounded by $\Gamma_t$.
Thus
$$
|\Gamma_t|'=-\G_t.
$$
It follows that
\be\label{eq:phi'}
\phi'(t) 
= 
2\M_t\M'_t
-16\pi |\Gamma_t|'+4a|\Gamma_t||\Gamma_t|'
\leq
-4\G_t\Big(\G_t-4\pi+a|\Gamma_t|\Big)
\leq
0,
\ee
where the last inequality is due to Gauss' equation and Gauss-Bonnet theorem. Indeed,
by Gauss' equation, for all $p\in\Gamma_t$, 
\be\label{eq:gauss}
G_t(p)=K_{\Gamma_t}(p)-K_M(T_p\Gamma_t),
\ee
where $K_{\Gamma_t}$ is the sectional curvature of $\Gamma_t$, and $K_M(T_p\Gamma_t)$ is the sectional curvature of $M$ with respect to the tangent plane $T_p \Gamma_t\subset T_pM$.   So, by Gauss-Bonnet theorem, 
\be\label{eq:GB}
\G_t=4\pi -\int_{p\in\Gamma_t} K_M(T_p\Gamma_t)\geq 4\pi-a|\Gamma_t|.
\ee
Hence $\phi'\leq 0$ as claimed. But since $\Gamma_t$ is convex and collapses to a point, 
 $|\Gamma_t|\to 0$, which yields that
 $$
 \lim_{t\to T}\phi(t)=\lim_{t\to T} \M_t^2\geq 0.
 $$ 
Thus $\phi(0)\geq 0$, which yields the desired inequality \eqref{eq:minkowski1}. 

If equality holds in \eqref{eq:minkowski1}, then $\phi(0)=0$ which yields $\phi(t)\equiv 0$, since $\phi(t)\geq 0$ and $\phi'(t)\leq 0$. Then $\phi'(t)\equiv 0$. So equalities hold in \eqref{eq:phi'}, which yields  $\M_t\M'_t=-2\G_t^2$. Consequently, the inequalities in \eqref{eq:CS} become equalities. This forces $G_t/H_t=\lambda(t)$, by the equality case in Cauchy-Schwarz inequality. So $\Gamma_t$ are parallel to $\Gamma$, which means that all points of $\Gamma$ have constant distance from $o$. Hence $\Gamma$ is a (geodesic) sphere. Finally, equalities in \eqref{eq:CS} force equalities in \eqref{eq:G'}. This forces $\textup{Ric}(\nu_t)\equiv 0$, which in turn yields that the sectional curvatures with respect to planes containing $\nu_t$ must vanish, since they are nonpositive. Consequently all sectional curvatures  of $M$ vanish in the (geodesic) ball bounded by $\Gamma$, by \cite[Lem. 5.4]{ghomi-spruck2022}, which completes the proof.
\end{proof}

\begin{note}\label{note:h-convex}
Andrews and Wei \cite{andrews-wei2018} showed that harmonic mean curvature flow preserves $h$-convexity of hypersurfaces in hyperbolic space. If this property holds  in any Cartan-Hadamard space,  then the proof of Theorem \ref{thm:minkowski1} may be refined to establish in that space the stronger inequality 
\be\label{eq:minkowski3}
\M(\Gamma)\geq \sqrt{16\pi|\Gamma|-\frac72\,a|\Gamma|^2},
\ee
 when $\Gamma$ is $h$-convex and $a<0$. To establish this claim, 
we rescale the metric of $M$ so that $a=-1$, for convenience. Then, similar to the proof of Theorem \ref{thm:minkowski1}, we set
$$
\phi(t):=\M_t^2-16\pi|\Gamma_t|-\frac72|\Gamma_t|^2,
$$
and compute that
\begin{eqnarray*}
\phi'(t)
=2\M_t\M_t'+\big(16\pi+7|\Gamma_t|\big)\G_t.
\end{eqnarray*}
Next recall that by \eqref{eq:G'}
\begin{eqnarray*}
\M_t\M_t'&\leq&-2\int H_t\int\frac{(G_t)^2 +G_t}{H_t}\,\\
&=& -2\int H_t\int\frac{(G_t+1/2)^2-1/4}{H_t}\\
&\leq& -2\left(\int \left(G_t+\frac{1}{2}\right)\right)^2+\frac{1}{2}\int H_t\int\frac{1}{H_t},
\end{eqnarray*}
where all integrals take place over $\Gamma$ with respect to $d\mu_t$.
If $\Gamma_t$ is $h$-convex, then its principal curvatures are $\geq 1$. Indeed the principal curvatures of horospheres in $M$ are bounded below by $1$, since principal curvatures of spheres of radius $\rho$ in $M$ are 
$\geq\coth(\rho)$ \cite[p. 184]{karcher1989}. It follows that  
\be\label{eq:HG}
2\leq H_t\leq 2G_t,
\ee
 which in turn yields
\be\label{eq:HH}
\int H_t\int\frac{1}{H_t}
\leq
\int 2G_t\int\frac12
=|\Gamma_t| \G_t.
\ee
Furthermore, since  by \eqref{eq:GB} $\G_t\geq 4\pi+|\Gamma|$,
$$
\left(\int \left(G_t+\frac{1}{2}\right)\right)^2=\G_t^2+\G_t|\Gamma_t|+\frac{1}{4}|\Gamma_t|^2
\geq \big(4\pi +2|\Gamma_t|\big)\G_t.
$$
So we have
\begin{eqnarray*}
\M_t\M_t'
\leq
-2\big(4\pi +2|\Gamma_t|\big)\G_t+\frac{1}{2}|\Gamma_t| \G_t
\leq 
-\left(8\pi +\frac{7}{2}|\Gamma_t|\right)\G_t,
\end{eqnarray*}
which in turn yields
$$
\phi'(t)
\leq-2\left(8\pi +\frac{7}{2}|\Gamma_t|\right)+\big(16\pi+7|\Gamma_t|\big)\G_t=0.
$$
Hence, since $\lim_{t\to T}\phi(t)\geq 0$, it follows that $\phi(0)\geq 0$, which establishes the desired inequality \eqref{eq:minkowski3}. Note that \eqref{eq:HG} was the only place where $h$-convexity was used in the above argument, which was for the sole purpose of establishing \eqref{eq:HH}. Thus \eqref{eq:minkowski3} holds whenever there exists a function $\lambda\colon [0,T)\to \R$ such that $\lambda(t)\leq H_t\leq \lambda(t) G_t$. 
\end{note}

\section{General Convex Surfaces}
Here we employ an approximation argument to extend \eqref{eq:minkowski1}, which we established for smooth strictly convex surfaces in the last section, to all convex surfaces in a Cartan-Hadamard $3$-manifold. This involves some basic facts about convex sets and their distance functions in Riemannian geometry which can be found in \cite[Sec. 2 \& 3]{ghomi-spruck2022} plus recent comparison results for total mean curvature obtained  in \cite{ghomi-spruck2023}. 

A subset of a Cartan-Hadamard manifold $M$ is \emph{convex} if it contains the geodesic segment connecting every pair of its points. A \emph{convex hypersurface} $\Gamma\subset M$ is the boundary of a compact convex set with interior points. Let $d_\Gamma\colon M\to\R$ be the \emph{distance function} of $\Gamma$, and $\Omega$ be the domain bounded by $\Gamma$. The \emph{signed} distance function of $\Gamma$ is defined by setting $\widehat d_\Gamma:=d_\Gamma$ on $M\setminus\Omega$ and $\widehat d_\Gamma:=-d_\Gamma$ on $\Omega$. The level sets 
$$
\Gamma_t:=\big(\widehat{d}_\Gamma\big)^{-1}(t)
$$ 
are called \emph{parallel hypersurfaces} of $\Gamma$. Unless noted otherwise, we assume that $t\geq 0$ and call $\Gamma_t$ the \emph{outer parallel} hypersurface, while $\Gamma_{-t}$ will be called the \emph{inner parallel} hypersurface of $\Gamma$. A fact which will be used frequently below is that  $\Gamma_t$ are $\C^{1,1}$ and convex for $t>0$ \cite[Sec. 2 \& 3]{ghomi-spruck2022}. In particular, for $t>0$, $\Gamma_t$ is twice differentiable almost everywhere and so its total mean curvature $\M(\Gamma_t)$ is well defined and positive. 

\begin{lemma}\label{lem:MGtToMG}
For any $\C^{1,1}$ convex hypersurface $\Gamma$ in a Cartan-Hadamard manifold, $t\mapsto\M(\Gamma_t)$ is a continuous nondecreasing function for $t\geq 0$.
\end{lemma}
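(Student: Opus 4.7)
The plan is to parametrize nearby parallel hypersurfaces of $\Gamma_t$ (for $t>0$, where $\Gamma_t$ is already $\mathcal{C}^{1,1}$ and convex) via the normal exponential map $\Phi_h(p) = \exp_p(h\nu_t(p))$, and to use the matrix Riccati equation governing the shape operator along normal geodesics to reduce the claim to a pointwise-a.e.\ first-variation computation. An alternative route would be pure smoothing to reduce to the smooth strictly convex case, but the Riccati approach avoids constructing smooth strictly convex approximants while preserving a.e.\ pointwise control.

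At each $p\in\Gamma_t$ where the shape operator $A_t(p)$ exists (a set of full measure by $\mathcal{C}^{1,1}$ regularity), the transported shape operator $A_h(p)$ and the Jacobian $J_h(p)$ solve the matrix Riccati ODE and its associated linear equation along the normal geodesic from $p$, with initial data $A_t(p)$ and $1$ respectively. Since the sectional curvature of $M$ is bounded on the compact region swept out and $A_t\in L^\infty$, these flows remain uniformly bounded for small $|h|$. The pullback formula
$$
\M(\Gamma_{t+h}) \;=\; \int_{\Gamma_t}\operatorname{trace}(A_h)\,J_h\,d\mu_t
$$
then depends continuously on $h$ by dominated convergence, proving continuity of $\M$ at $t$. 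Continuity at $t=0^+$ follows by applying the same argument with $t\downarrow 0$ and using the $\mathcal{C}^{1,1}$ convergence of $\Gamma_t\to\Gamma$ with uniformly bounded principal curvatures.

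For monotonicity, differentiate the pullback formula in $h$ at $h=0$, again justified by the uniform Riccati bounds. The pointwise identity $(d/dh)\bigl(\operatorname{trace}(A_h)J_h\bigr)\big|_{h=0} = H_t^2 - |\mathrm{I\!I}_t|^2 - \textup{Ric}(\nu_t)$ is the $F_\tau\equiv-1$ specialization of the evolution formulas used in the proof of Theorem~\ref{thm:minkowski1}. Both $H_t^2 - |\mathrm{I\!I}_t|^2=2\sum_{i<j}\kappa_i\kappa_j$ and $-\textup{Ric}(\nu_t)$ are nonnegative — the first by convexity of $\Gamma_t$, the second by $K\leq 0$ — so $\M(\Gamma_\tau)'\geq 0$ in the a.e.\ sense, and together with continuity this gives the nondecreasing property.

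The main obstacle is making the pointwise-a.e.\ Riccati calculation fully rigorous: one must verify that the Riccati flow from the a.e.\ defined $A_t(p)$ genuinely recovers the a.e.\ shape operator of $\Gamma_{t+h}$, that $\Phi_h$ remains a bi-Lipschitz diffeomorphism of $\Gamma_t$ onto $\Gamma_{t+h}$ for small $|h|$ including negative values (so long as $t+h>0$), and that differentiation in $h$ can be brought under the integral with a single dominating function. This is a technical matter that should be handled within the $\mathcal{C}^{1,1}$ framework for parallel hypersurfaces and signed distance functions developed in \cite[Sec.~2~\&~3]{ghomi-spruck2022}.
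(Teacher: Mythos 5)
Your proposal is correct, and the underlying computation is precisely the one the paper relies on: the outward parallel variation of $H\,d\mu$ equals $\bigl(H^2-|\mathrm{I\!I}|^2-\textup{Ric}(\nu)\bigr)\,d\mu = \bigl(2\sigma_2(\kappa)-\textup{Ric}(\nu)\bigr)\,d\mu$, nonnegative by convexity and $K\leq 0$. The difference is that you rebuild this via the Riccati ODE along normal geodesics and a pullback/dominated-convergence argument, whereas the paper simply quotes formula (12) of \cite{ghomi-spruck2023}, which packages the same pointwise identity into an already-integrated \emph{difference} formula $\M(\Gamma_{t_2})-\M(\Gamma_{t_1})=\int_{\Omega_{t_2}\setminus\Omega_{t_1}}\bigl(2\sigma_2(\kappa)-\textup{Ric}(\nabla\widehat{d}_\Gamma)\bigr)$, valid for parallel $\C^{1,1}$ convex hypersurfaces. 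The integrated version buys both conclusions at once: monotonicity is the sign of the integrand, and continuity is just absolute continuity of a fixed $L^1$ function over a shrinking annulus --- no differentiation under the integral sign, no separate treatment of $t\downarrow 0$, and no need to establish that the parallel map stays bi-Lipschitz for small negative $h$. What you correctly flag as your main obstacle (rigorizing the a.e.\ Riccati step in the $\C^{1,1}$ setting) is exactly the content of \cite[Thm.~3.1]{ghomi-spruck2023}, not \cite{ghomi-spruck2022}; given the paper's existing infrastructure the intended move is to cite that result rather than reprove it. Your route would also work once that technical input is supplied, so there is no mathematical gap --- only an unnecessarily heavier path to the same integrand.
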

\begin{proof}
Let $\Omega_t$ denote the domain bounded by  $\Gamma_t$.
By \cite[(12)]{ghomi-spruck2023}, for $0\leq t_1\leq t_2$,
\be\label{eq:continuity}
\M(\Gamma_{t_2})-\M(\Gamma_{t_1})=\int_{\Omega_{t_2}\setminus\Omega_{t_1}}\big(2\sigma_2(\kappa)-\textup{Ric}(\nabla \widehat{d}_\Gamma)\big),
\ee
where $\kappa=(\kappa_1,\dots,\kappa_{n-1})$ refers to the principal curvatures of parallel hypersurfaces of $\Gamma$ and $\sigma_2$ is the second symmetric elementary function. Since $\Gamma$ is convex, $\sigma_2(\kappa)\geq 0$, and by assumption the Ricci curvature of $M$ is nonpositive. Thus $t\mapsto\M(\Gamma_t)$ is nondecreasing. The above expression also yields the continuity of $t\mapsto\M(\Gamma_t)$, since the integrand depends only on $\Gamma$ and $M$. So the integral vanishes as $t_1\to t_2$, or $t_2\to t_1$.
\end{proof}

Now for any convex hypersurface $\Gamma$ in a Cartan-Hadamard manifold, which may not be $\C^{1,1}$, we set
$$
\M(\Gamma):=\lim _{t\to 0^+} \M(\Gamma_t).
$$
Since $\M(\Gamma_t)\geq 0$, and by Lemma \ref{lem:MGtToMG}, $\M(\Gamma_t)$ does not increase as $t\to 0^+$, the above limit exists. Furthermore, continuity of $t\mapsto\M(\Gamma_t)$ ensures that, in case $\Gamma$ is $\C^{1,1}$,  the above definition coincides with the regular definition of $\M(\Gamma)$ as the integral of mean curvature. Now that  $\M(\Gamma)$ is well-defined for all convex hypersurface, we may state the main result of this section:

\begin{theorem}\label{thm:minkowski2}
Minkowski's inequality \eqref{eq:minkowski1} holds for all convex surfaces $\Gamma$ in a Cartan-Hadamard $3$-manifold $M$ with curvature $K\leq a\leq 0$.
\end{theorem}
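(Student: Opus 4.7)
The plan is to extend inequality \eqref{eq:minkowski1} from smooth strictly convex surfaces to all convex surfaces via a two-step approximation, using the $\C^{1,1}$ regularity of outer parallel hypersurfaces as a bridge.

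First, I would establish \eqref{eq:minkowski1} for $\C^{1,1}$ convex surfaces $\Sigma$. Given such a $\Sigma$, I approximate it by smooth strictly convex $\Sigma^j$ satisfying $|\Sigma^j|\to|\Sigma|$ and $\M(\Sigma^j)\to\M(\Sigma)$. A natural construction is to express $\Sigma$ locally in Riemannian normal coordinates as the graph of a $\C^{1,1}$ convex function $u$, replace $u$ by $u^j:=u*\rho_{1/j}+\epsilon_j q$ for a smooth strictly convex $q$ and $\epsilon_j\to 0$, and patch these local smoothings via a partition of unity. Since $D^2u\in L^\infty$, dominated convergence delivers the desired convergence of both area and total mean curvature. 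Applying Theorem \ref{thm:minkowski1} to each $\Sigma^j$ and letting $j\to\infty$ then yields \eqref{eq:minkowski1} for $\Sigma$.

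Next, for an arbitrary convex $\Gamma$, the outer parallel hypersurfaces $\Gamma_t$ are $\C^{1,1}$ and convex for every $t>0$, so the previous step gives
$$\M(\Gamma_t)\geq\sqrt{16\pi|\Gamma_t|-2a|\Gamma_t|^2}\qquad\text{for all }t>0.$$
As $t\to 0^+$, the left-hand side converges to $\M(\Gamma)$ by the definition introduced immediately after Lemma \ref{lem:MGtToMG}. For the right-hand side, the convex bodies $\Omega_t$ converge to $\ol{\Omega}$ in Hausdorff distance, and continuity of the surface area functional on convex bodies with nonempty interior (transported to the Cartan-Hadamard setting via normal coordinates and Steiner-type estimates) yields $|\Gamma_t|\to|\Gamma|$. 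This completes the proof.

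The main obstacle is the convergence $\M(\Sigma^j)\to\M(\Sigma)$ in the first step, since total mean curvature is a second-order quantity while $\C^{1,1}$ regularity only provides an $L^\infty$ bound on the second fundamental form. Mollification of the local graphing function gives a.e.\ convergence of the Hessians, and the uniform bound then delivers $L^1$ convergence of the mean curvature via dominated convergence. The strict convexity of the approximants $\Sigma^j$, required in order to invoke Theorem \ref{thm:minkowski1}, is ensured by the small perturbation $\epsilon_j q$, whose contributions to area and mean curvature vanish as $\epsilon_j\to 0$.
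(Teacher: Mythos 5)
Your overall strategy---smooth first, then take a limit through the $\C^{1,1}$ outer parallels---is a reasonable plan, but the specific smoothing scheme in your first step has a genuine gap that the paper's Lemma \ref{lem:smoothing} is designed to avoid. Patching local mollifications of graphing functions via a partition of unity does not in general produce a convex hypersurface: convexity is a global geodesic condition, while the patched object is only a locally convex graph on each chart, and the partition functions contribute terms with indefinite Hessian on the overlaps. Moreover, the graphing function of a convex hypersurface written in Riemannian normal coordinates need not be convex in the Euclidean sense, so even the individual mollified local graphs are not guaranteed to be strictly convex as submanifolds of $M$. Without global strict convexity of the approximants $\Sigma^j$, you cannot invoke Theorem \ref{thm:minkowski1} at all.

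The paper sidesteps both issues at once by smoothing globally: it takes the distance function $u$ from $\Omega$, strictly convexifies it by adding $\epsilon\rho^2$ where $\rho$ is the distance from an interior point, and applies the Greene--Wu Riemannian convolution, which is a manifestly convexity-preserving operation whose level sets furnish smooth strictly convex $\Gamma^i$ that are $\C^0$-close to $\Gamma$. The paper also avoids your second delicate step---establishing $\M(\Sigma^j)\to\M(\Sigma)$ by dominated convergence of second derivatives, which requires more than $\C^{1,1}$ data gives you, especially since mollified Hessians of a $W^{2,\infty}$ function converge a.e.\ but the patched construction breaks this---by instead pushing each $\Gamma^i$ slightly outward so it is nested between $\Gamma$ and $\Gamma_\epsilon$, then invoking monotonicity of area (Lemma \ref{lem:monotone-area}) and of total mean curvature (Lemma \ref{lem:monotonicity}) under nesting, together with continuity of $t\mapsto\M(\Gamma_t)$ (Lemma \ref{lem:MGtToMG}). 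This sandwich argument reduces everything to $\C^0$ convergence and needs no control whatsoever on derivatives of the approximants. Your reduction of the general convex case to the $\C^{1,1}$ case via outer parallels is sound and effectively contained in the paper's argument, but the $\C^{1,1}$-to-smooth step should use the global convolution plus the monotonicity sandwich rather than local mollification and dominated convergence.
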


To establish this theorem we need  the following facts:

\begin{lemma}\label{lem:smoothing}
Smooth strictly convex hypersurfaces are dense in the space of $\C^k$ convex hypersurfaces of a Cartan-Hadamard manifold with respect to $\C^k$ topology, for $k\geq 0$.
\end{lemma}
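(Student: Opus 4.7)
The plan is to produce smooth strictly convex approximations by a two-stage procedure: first, smooth $\Gamma$ in local graph coordinates to obtain a smooth (but only weakly convex) approximation, and then promote convexity to strict convexity via a perturbation by a globally strictly convex function available in the Cartan-Hadamard setting.

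For the first stage, I would cover $\Gamma$ by finitely many open sets $U_i\subset M$ equipped with Riemannian normal coordinates centered at points $p_i \in \Gamma$, chosen so that $\Gamma \cap U_i$ is the graph of a $C^k$ function $u_i$ over the tangent plane $T_{p_i}\Gamma$; convexity of $\Gamma$ in $M$ is equivalent, in these coordinates, to a positivity condition on $D^2 u_i$ corrected by Christoffel terms. Mollifying each $u_i$ at scale $\epsilon$ by standard Euclidean convolution produces $u_i^\epsilon \in C^\infty$ with $u_i^\epsilon \to u_i$ in $C^k$. Assembling via a partition of unity subordinate to the cover, using that in overlaps the local graph functions differ only by a Lipschitz change of coordinates, yields a smooth hypersurface $\Gamma^\epsilon$ that is $C^k$-close to $\Gamma$ and convex up to an $O(\epsilon)$ defect inherited from mollification.

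In the second stage, to upgrade convexity to \emph{strict} convexity, I would fix an interior point $o\in\Omega$ and use that in a Cartan-Hadamard manifold $d_o^2$ is smooth with uniformly positive-definite Hessian, by standard Hessian comparison with Euclidean space (where the nonpositive-curvature hypothesis is essential). Taking, for small $\delta>0$, the hypersurface $\Gamma^{\epsilon,\delta}$ to be a suitable level set of $\widehat{d}_{\Gamma^\epsilon} + \delta\, d_o^2$ perturbs $\Gamma^\epsilon$ in the normal direction by an amount of order $\delta$ and augments its second fundamental form by a uniformly positive contribution of order $\delta$, which dominates the $O(\epsilon)$ convexity defect whenever $\delta \gg \epsilon$.

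The main obstacle is balancing $\epsilon$ and $\delta$: the perturbation must be large enough to force strict convexity, yet small enough in $C^k$ norm to preserve closeness to $\Gamma$. Choosing $\delta = \sqrt{\epsilon}$ (or any function with $\epsilon \ll \delta \to 0$) should do the job. A secondary technicality is the $C^0$ case, where $\Gamma$ admits no graph representation of positive smoothness; there one first passes to an outer parallel $\Gamma_t$, which is $C^{1,1}$ and convex and converges to $\Gamma$ in Hausdorff distance (equivalently, $C^0$ for convex hypersurfaces) as $t\to 0^+$, reducing to the regular case. Altogether, this yields a family of smooth strictly convex hypersurfaces converging to $\Gamma$ in $C^k$, as required.
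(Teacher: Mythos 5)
Your two-stage plan shares the paper's essential idea of perturbing by a uniformly strictly convex quantity (in your case $\delta d_o^2$, in the paper's case $\epsilon\rho^2$ where $\rho=d_{x_0}$), but the paper carries out the smoothing step quite differently and thereby avoids a gap that your first stage leaves open. The paper never smooths the hypersurface directly: it works with the distance function $u = d_\Omega$ on $M$, adds $\epsilon\rho^2$ to make it \emph{strictly} convex first, and then applies the Greene--Wu Riemannian convolution $u^\epsilon_\lambda$, which is a single global smoothing operator that is known to preserve (strict) convexity of functions. Level sets of the resulting smooth strictly convex function are then automatically smooth strictly convex hypersurfaces. In particular, because the strictly convex perturbation is added \emph{before} smoothing and the smoothing preserves it, no delicate $\delta \gg \epsilon$ balancing is needed.

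The genuine gap in your proposal is the partition-of-unity patching of the locally mollified graphs. The local graph functions $u_i$ live over different tangent planes $T_{p_i}\Gamma$, so there is no common domain on which to form a convex combination $\sum\phi_i u_i^\epsilon$; one must first re-express the mollified pieces in a common coordinate system, and mollification does not commute with the (smooth, not merely Lipschitz) transition maps. Even after such a reconciliation, differentiating the partition of unity twice introduces terms $D^2\phi_i\,(u_i^\epsilon - u_j^\epsilon)$ whose size is not obviously $O(\epsilon)$ in the relevant norm (particularly in the $C^{1,1}$ case, where $D^2 u_i^\epsilon$ converges only a.e.), so the claim that the patched $\Gamma^\epsilon$ is ``convex up to an $O(\epsilon)$ defect'' is asserted rather than proved. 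This is precisely the difficulty that the Greene--Wu construction was designed to handle, and the paper's proof leans on their theorem rather than re-deriving a patching estimate. Your reduction of the $C^0$ case to $C^{1,1}$ via outer parallels is correct and matches the paper's general philosophy, and your second stage is sound modulo the first, but as written stage one does not close.
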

\begin{proof}
Let $\Gamma$ be a convex hypersurface in a Cartan-Hadamard manifold $M$, and $u\colon M\to \R$ be the distance function from the domain $\Omega$ bounded by $\Gamma$. Let $x_0$ be a point in the interior of $\Omega$, and $\rho$ be the distance function from $x_0$. Then, for $\epsilon>0$, $u^\epsilon(x):=u(x)+\epsilon \rho^2(x)$ is a strictly convex function in the sense of Greene and Wu \cite{greene-wu1972}. Consequently, the Greene-Wu convolution $u^\epsilon_\lambda$ yields a family of smooth strictly convex functions converging to $u^\epsilon$ with respect to $\C^k$ norm over any compact set, as $\lambda\to 0$ \cite[Thm. 2 \& Lem. 3.3]{greene-wu1976}; see \cite[p. 21--22]{ghomi-spruck2022}. 
In particular, for any given integer $i>0$, we may choose $\epsilon$ and $\lambda$ so small that a level set $\Gamma^i$ of $u ^\epsilon_\lambda$ lies within a neighborhood of $\Gamma$ of radius $1/i$. Then $\Gamma_i$ converges to $\Gamma$ with respect to $\C^k$ topology, which completes the proof.
\end{proof}

We say that a set is \emph{nested inside} a convex hypersurface
$\Gamma$ provided that it lies in the convex domain bounded by $\Gamma$. The following monotonicity property is a quick consequence of an analogous result in \cite{ghomi-spruck2023} for $\C^{1,1}$ surfaces:

\begin{lemma}\label{lem:monotonicity}
Let $\gamma$, $\Gamma$ be a pair of of convex hypersurfaces in a Cartan-Hadamard manifold. Suppose that $\gamma$ is nested inside $\Gamma$. Then $\M(\gamma)\leq \M(\Gamma)$.
\end{lemma}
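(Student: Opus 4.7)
The plan is to deduce the general case from the $\C^{1,1}$ monotonicity of \cite{ghomi-spruck2023} by passing to outer parallel hypersurfaces and then letting the parallel distance tend to zero. Concretely, for $t>0$ let $\gamma_t$ and $\Gamma_t$ denote the outer parallel hypersurfaces at distance $t$. As recalled before Lemma \ref{lem:MGtToMG}, $\gamma_t$ and $\Gamma_t$ are both $\C^{1,1}$ and convex for $t>0$, so the total mean curvatures $\M(\gamma_t)$ and $\M(\Gamma_t)$ are defined in the classical sense.

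The first step is to verify that nesting is preserved under taking outer parallel sets. Let $\Omega_\gamma$ and $\Omega_\Gamma$ denote the convex domains bounded by $\gamma$ and $\Gamma$ respectively. Since $\gamma$ is nested inside $\Gamma$, we have $\overline{\Omega_\gamma}\subset\overline{\Omega_\Gamma}$. The closed outer parallel domain at distance $t$ of a convex set in a Cartan-Hadamard manifold is simply its closed $t$-tubular neighborhood with respect to the ambient distance, and tubular neighborhoods are monotone under inclusion. Hence the closed domain bounded by $\gamma_t$ is contained in the closed domain bounded by $\Gamma_t$, i.e., $\gamma_t$ is nested inside $\Gamma_t$.

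The second step is to invoke the $\C^{1,1}$ version of the monotonicity property from \cite{ghomi-spruck2023}, which yields
\[
\M(\gamma_t)\;\leq\;\M(\Gamma_t)\qquad\text{for every }t>0.
\]
Finally, letting $t\to 0^+$ and applying the definition $\M(\gamma):=\lim_{t\to 0^+}\M(\gamma_t)$ and $\M(\Gamma):=\lim_{t\to 0^+}\M(\Gamma_t)$ (whose existence is guaranteed by Lemma \ref{lem:MGtToMG}) gives the desired inequality $\M(\gamma)\leq\M(\Gamma)$.

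The only potentially delicate point is the preservation of the nesting after taking parallel surfaces; but once one identifies outer parallel domains with closed tubular neighborhoods of the underlying convex sets, this is immediate from monotonicity of $t$-neighborhoods with respect to set inclusion. Thus the argument is really a one-line reduction to the $\C^{1,1}$ case plus the passage to the limit afforded by the definition of $\M$ on general convex hypersurfaces.
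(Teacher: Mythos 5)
Your proof is correct and follows the same route as the paper: pass to $\C^{1,1}$ outer parallel hypersurfaces, invoke the monotonicity result of \cite{ghomi-spruck2023} (Cor.~4.1 there), and let $t\to 0^+$ using the definition of $\M$ for general convex hypersurfaces. The only difference is that you spell out the (straightforward) preservation of nesting under outer parallel sets, which the paper states without elaboration.
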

\begin{proof}
For every $t>0$, $\gamma_t$ and $\Gamma_t$ are $\C^{1,1}$ convex hypersurfaces, with $\gamma_t$ nested inside $\Gamma_t$. Thus $\M(\gamma_t)\leq \M(\Gamma_t)$ by \cite[Cor. 4.1]{ghomi-spruck2023}. Letting $t\to 0$ completes the proof.
\end{proof}

The next observation follows from the  fact that the nearest point projection into a convex set is distance nonincreasing in Cartan-Hadamard manifolds \cite[Prop. 2.4(4)]{bridson-haefliger1999}.

\begin{lemma}\label{lem:monotone-area}
Let $\gamma$, $\Gamma$ be a pair of convex hypersurfaces in a Cartan-Hadamard manifold, with $\gamma$ nested inside $\Gamma$. Then $|\gamma|\leq |\Gamma|$, with equality only if $\gamma=\Gamma$.
\end{lemma}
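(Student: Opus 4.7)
The plan is to realize the nearest-point projection onto the convex region bounded by $\gamma$ as a $1$-Lipschitz surjection $\Gamma\to\gamma$, and then apply the area formula. Let $K$ denote the closed convex set bounded by $\gamma$, and let $\pi\colon M\to K$ be the nearest-point projection, which is $1$-Lipschitz by the cited \cite[Prop.\ 2.4(4)]{bridson-haefliger1999}. Since $\gamma$ is nested inside $\Gamma$, every point of $\Gamma$ lies in $M\setminus\inte(K)$, so $\pi$ sends $\Gamma$ into $\partial K=\gamma$.

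To see that $\pi|_\Gamma$ is surjective, fix $q\in\gamma$ and choose a unit vector $\nu$ in the outward normal cone of $K$ at $q$ (nonempty by the existence of a supporting hyperplane to the convex body $K$). A standard consequence of the convexity of $d(\cdot,K)$ on the Cartan-Hadamard manifold $M$ is that the geodesic ray $c(t):=\exp_q(t\nu)$ satisfies $\pi(c(t))=q$ for every $t\geq 0$. Compactness of $\ol\Omega$ then gives a first time $t^*\geq 0$ at which $c$ meets $\Gamma$, producing $p:=c(t^*)\in\Gamma$ with $\pi(p)=q$.

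Applying the area formula for Lipschitz maps to $\pi|_\Gamma$, whose Jacobian is at most $1$ almost everywhere by the $1$-Lipschitz property, we then obtain
\[
|\gamma|\ \leq\ \int_\gamma \#\pi^{-1}(q)\,d\mu_\gamma(q)\ =\ \int_\Gamma J\pi\,d\mu_\Gamma\ \leq\ |\Gamma|.
\]

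The main obstacle is the equality case. Equality $|\gamma|=|\Gamma|$ forces $J\pi\equiv 1$ almost everywhere and $\#\pi^{-1}(q)=1$ for $\mu_\gamma$-almost every $q$, so $\pi|_\Gamma$ is an essentially bijective measure-preserving map. If $\gamma\neq\Gamma$, then some $p_0\in\Gamma$ satisfies $d(p_0,\gamma)>0$; a neighborhood $V\subset\Gamma$ of $p_0$ would, via the outward normal rays from $\pi(V)\subset\gamma$, be forced to coincide with the outer parallel of $\pi(V)$ at uniformly positive distance, and the first-variation-of-area formula for outer parallels of a convex hypersurface in a Cartan-Hadamard manifold would give the strict inequality $|V|>|\pi(V)|$, contradicting $J\pi\equiv 1$ on $V$. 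This last rigidity step is the most delicate, and may benefit from the $\C^{1,1}$ comparison machinery of \cite{ghomi-spruck2023} used in Lemma \ref{lem:monotonicity}, or from approximation via Lemma \ref{lem:smoothing}.
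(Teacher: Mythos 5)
Your approach is exactly the one the paper intends: the lemma is stated in the paper with only the one-line justification that the nearest-point projection onto a convex set is $1$-Lipschitz, and you correctly turn this into a full argument for the inequality. The surjectivity of $\pi|_\Gamma\colon\Gamma\to\gamma$ via outward normal rays (each ray from $q\in\gamma$ projects back to $q$, and must exit the compact $\ol\Omega$ through $\Gamma$), combined with the area formula and $J\pi\leq 1$, gives $|\gamma|\leq|\Gamma|$ cleanly and in full generality.

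The equality case, however, is only sketched and contains a gap. The assertion that a neighborhood $V\subset\Gamma$ of $p_0$ ``would be forced to coincide with the outer parallel of $\pi(V)$ at uniformly positive distance'' is not justified: nothing you have said prevents $\widehat d_\gamma$ from varying on $V$, in which case $V$ is not a fixed-distance parallel. The missing step is to observe that $d\pi_x$ annihilates $\nabla\widehat d_\gamma(x)$ and therefore factors through the orthogonal projection onto $\nabla\widehat d_\gamma(x)^\perp$; so if $J(\pi|_\Gamma)(x)=1$, the restriction $d\pi_x|_{T_x\Gamma}$ is an isometry, which forces $T_x\Gamma\subset\nabla\widehat d_\gamma(x)^\perp$, i.e.\ $\nabla\widehat d_\gamma(x)=\pm\nu_\Gamma(x)$. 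If this holds a.e., the tangential gradient of $\widehat d_\gamma$ along $\Gamma$ vanishes, so $\widehat d_\gamma$ is constant on $\Gamma$ and $\Gamma=\gamma_t$ for some $t\geq 0$. One then needs $|\gamma_t|>|\gamma|$ for $t>0$, which your appeal to the first variation of area can deliver, but only after noting that $\M(\gamma_s)>0$ for $s>0$ (a closed $\C^{1,1}$ convex hypersurface in a Cartan--Hadamard manifold has strictly positive total mean curvature, e.g.\ by comparison with an inscribed geodesic sphere at a point of contact) together with continuity of $s\mapsto|\gamma_s|$ at $s=0^+$. You flag this step as delicate, which is honest, but as written the rigidity is not proved.
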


Now were are ready to establish the main result of this section:

\begin{proof}[Proof of Theorem \ref{thm:minkowski2}]
By Lemma \ref{lem:smoothing}, there exists a family $\Gamma^i\subset M$ of smooth strictly convex hypersurfaces which converge to $\Gamma$ with respect to $\C^0$ topology. After replacing each $\Gamma^i$ by an outer parallel hypersurface, we may assume that $\Gamma^i\subset M\setminus\Omega$, where $\Omega$ is the domain bounded by $\Gamma$.  By Theorem \ref{thm:minkowski1}, $\M(\Gamma^i)$ satisfy \eqref{eq:minkowski1}. Thus it suffices to check that $|\Gamma^i|\to |\Gamma|$ and  $\M(\Gamma^i)\to\M(\Gamma)$. For every $\epsilon>0$, there exists  an integer $N$ such that $\Gamma^i$ lies in the region bounded by $\Gamma$ and $\Gamma_{\epsilon}$ for $i\geq N$. Thus
$$
|\Gamma|\leq |\Gamma^i|\leq|\Gamma_\epsilon|,
\quad\text{and}\quad
\M(\Gamma)\leq \M(\Gamma^i)\leq \M(\Gamma_{\epsilon}),
$$
by Lemmas \ref{lem:monotone-area} and \ref{lem:monotonicity}. 
As $\epsilon\to 0$, $|\Gamma_\epsilon|\to |\Gamma|$, and by Lemma \ref{lem:MGtToMG},
$\M(\Gamma_\epsilon)\to \M(\Gamma)$ as well, which completes the proof.
\end{proof}

\section{Isoperimetric Inequality}\label{sec:isoperimetric}
Here we prove Theorem \ref{thm:isoperimetric}, using the generalized Minkowski's inequality \eqref{eq:minkowski1} derived in the last section, and a Steiner type formula which we will establish below. To this end we need to define the total Gauss-Kronecker curvature of a general convex hypersurface $\Gamma$ in a Cartan-Hadamard manifold. Similar to our treatment for total mean curvature in the last section, we set
\be\label{eq:defG}
\G(\Gamma):=\lim_{t\to 0^+}\G(\Gamma_t),
\ee
where recall that $\Gamma_t$ denote the outer parallel hypersurfaces of $\Gamma$.
By \cite[Cor. 4.4]{ghomi-spruck2023}, $\G(\Gamma_t)$ does not increase as $t\to 0^+$. Thus, since $\G(\Gamma_t)\geq 0$, the above limit exists. Let us also record that:

\begin{lemma}\label{lem:GK-continuous}
The total Gauss-Kronecker curvature $\G(\Gamma)$ is continuous in the space of $\C^{1,1}$ convex surfaces $\Gamma$ in a Cartan-Hadamard $3$-manifold $M$ with respect to $\C^1$ topology.
\end{lemma}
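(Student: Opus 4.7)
The plan is to express $\G(\Gamma)$ in terms of purely first-order data of $\Gamma$ via the Gauss equation combined with Gauss-Bonnet, and thereby reduce the claim to a routine continuity statement for integrals of continuous integrands under $\C^1$ convergence. Concretely, the identity I would like to exploit is
$$
\G(\Gamma)\;=\;4\pi\;-\;\int_{\Gamma} K_M(T_p\Gamma)\,d\mu(p),
$$
which for smooth $\Gamma$ is exactly the combination of \eqref{eq:gauss} and \eqref{eq:GB}. For a $\C^{1,1}$ convex surface, the second fundamental form (and hence $G$) is defined a.e.\ and bounded, the intrinsic metric is Lipschitz with $L^\infty$ Gauss curvature, and the same derivation goes through by invoking Gauss-Bonnet at this regularity. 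Moreover, applying the identity to each outer parallel surface $\Gamma_t$ ($t>0$) and letting $t\to 0^+$ shows that the limit definition \eqref{eq:defG} of $\G(\Gamma)$ coincides with $\int_\Gamma G\,d\mu$, so the two interpretations of $\G$ agree for $\C^{1,1}$ surfaces.

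Granted this representation, continuity in $\C^1$ is essentially free. If $\Gamma^i\to\Gamma$ in $\C^1$ topology, then the tangent plane maps $p\mapsto T_p\Gamma^i$ converge uniformly, as maps into the Grassmann bundle of $2$-planes of $TM$, to $p\mapsto T_p\Gamma$, and the induced area forms converge as measures of uniformly bounded total mass. Since $K_M$ is continuous on this Grassmann bundle and bounded on any compact region eventually containing all the $\Gamma^i$, dominated convergence yields
$$
\int_{\Gamma^i} K_M(T_p\Gamma^i)\,d\mu^i\;\longrightarrow\;\int_{\Gamma} K_M(T_p\Gamma)\,d\mu,
$$
and hence $\G(\Gamma^i)\to\G(\Gamma)$ via the displayed identity.

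The main technical obstacle is the justification of that identity at $\C^{1,1}$ regularity, in particular the integrated Gauss-Bonnet formula $\int_{\Gamma}K_\Gamma\,d\mu = 4\pi$ for a Lipschitz Riemannian metric with $L^\infty$ Gauss curvature. A clean self-contained route is a smoothing step via Lemma \ref{lem:smoothing}: approximate a fixed parallel $\Gamma_t$ ($t>0$) in $\C^1$ by smooth strictly convex surfaces $\widetilde{\Gamma}^j$, apply the classical smooth identity to each $\widetilde{\Gamma}^j$, and pass to the limit on both sides—the right-hand side by the uniform $\C^1$ argument above, and the left-hand side by the convergence of second fundamental forms of outer parallel surfaces developed in \cite{ghomi-spruck2022,ghomi-spruck2023}. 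Once the identity is secured for each $\Gamma_t$, the $t\to 0^+$ limit extends it to $\Gamma$ by \eqref{eq:defG}, after which Step~2 above delivers the lemma.
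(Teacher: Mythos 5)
Your proposal follows essentially the same route as the paper: express $\G(\Gamma)$ purely in terms of first-order data via Gauss' equation \eqref{eq:gauss} and Gauss-Bonnet, namely $\G(\Gamma) = 4\pi - \int_\Gamma K_M(T_p\Gamma)\,d\mu$, and then observe that $\C^1$ convergence of the surfaces gives uniform convergence of tangent planes and area measures, hence convergence of the integrals. The extra care you take in justifying Gauss-Bonnet and the Gauss identity at $\C^{1,1}$ regularity (via smoothing and passing to the limit on parallel surfaces) addresses a point the paper's two-line proof does not spell out, but it is a refinement of the same argument rather than a different one.
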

\begin{proof}
Let $\Gamma^i$ be a sequence of $\C^{1,1}$ convex surfaces in $M$ converging to $\Gamma$ with respect to $\C^1$ topology. Then Gauss' equation \eqref{eq:gauss} together with Gauss-Bonnet theorem  yields
$$
\G(\Gamma^i)=4\pi -\int_{p\in\Gamma^i} K_M(T_p\Gamma^i)\quad\longrightarrow\quad 4\pi -\int_{p\in\Gamma} K_M(T_p\Gamma)=\G(\Gamma),
$$
as desired.
\end{proof}

Now we can establish the following Steiner type formula for general convex surfaces, using tube formulas of Gray \cite{gray2004} together with an approximation argument.

\begin{lemma}\label{lem:steiner}
Let $\Gamma$ be a convex surface in a Cartan-Hadamard $3$-manifold. Then, for any $t\geq 0$,
\be\label{eq:steiner}
|\Gamma_t| \geq |\Gamma|+\M(\Gamma)t+\G(\Gamma)t^2.
\ee
\end{lemma}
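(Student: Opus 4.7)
The plan is to establish \eqref{eq:steiner} first when $\Gamma$ is $C^{1,1}$, by combining the first-variation relation for the area of parallels (a tube-formula identity of the kind already used in the paper) with the Gray-type identity \cite[(12)]{ghomi-spruck2023} that appeared in Lemma~\ref{lem:MGtToMG}; I will then extend to an arbitrary convex $\Gamma$ by applying the $C^{1,1}$ version to the parallel $\Gamma_\delta$ and letting $\delta\to 0^+$.

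For $\Gamma$ of class $C^{1,1}$, I will repeat the Stokes-plus-coarea computation from the proof of Theorem~\ref{thm:minkowski1} (now with the outward-pointing unit gradient $\nabla\widehat d_\Gamma$, so that the overall sign is reversed) to obtain
$$
|\Gamma_t| - |\Gamma| \;=\; \int_0^t \M(\Gamma_s)\, ds,
$$
and then apply~\cite[(12)]{ghomi-spruck2023} together with coarea to get
$$
\M(\Gamma_s) - \M(\Gamma) \;=\; \int_0^s\!\int_{\Gamma_r} \big(2 G_r - \textup{Ric}(\nu_r)\big)\, d\mu_r\, dr.
$$
Since $G_r\geq 0$ on the convex surface $\Gamma_r$ and $\textup{Ric}\leq 0$ in a Cartan-Hadamard manifold, the last expression is at least $2\int_0^s \G(\Gamma_r)\, dr$, and the nondecreasing monotonicity of $\G$ on parallel families built into \eqref{eq:defG} via \cite[Cor.~4.4]{ghomi-spruck2023} forces $\G(\Gamma_r)\geq \G(\Gamma)$. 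Hence $\M(\Gamma_s) \geq \M(\Gamma) + 2s\,\G(\Gamma)$, and a final integration in $s$ over $[0,t]$ produces \eqref{eq:steiner} in the $C^{1,1}$ case.

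For an arbitrary convex $\Gamma$, I will apply the inequality just obtained to the $C^{1,1}$ parallel $\Gamma_\delta$, which satisfies $(\Gamma_\delta)_t = \Gamma_{\delta+t}$, to get
$$
|\Gamma_{\delta+t}| \;\geq\; |\Gamma_\delta| + \M(\Gamma_\delta)\, t + \G(\Gamma_\delta)\, t^2,
$$
and then let $\delta\to 0^+$. The definitions of $\M(\Gamma)$ and $\G(\Gamma)$ introduced earlier in this section supply $\M(\Gamma_\delta)\to\M(\Gamma)$ and $\G(\Gamma_\delta)\to\G(\Gamma)$, while the continuity of area of parallel convex bodies (used already in the proof of Theorem~\ref{thm:minkowski2}) gives $|\Gamma_\delta|\to|\Gamma|$ and $|\Gamma_{\delta+t}|\to|\Gamma_t|$. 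This completes the extension.

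The main technical obstacle is making the two integrated identities of the second paragraph rigorous in the merely $C^{1,1}$ (rather than smooth) setting; this rests on the a.e.\ twice-differentiability of the signed distance function $\widehat d_\Gamma$ together with the eikonal relation $|\nabla \widehat d_\Gamma|=1$, which is exactly what legitimizes the Stokes-plus-coarea step and the application of~\cite[(12)]{ghomi-spruck2023}. Once these identities are in hand, the sign of $\textup{Ric}$ and the monotonicity of $\G$ on parallel families deliver the two successive lower bounds needed for the double integration to yield~\eqref{eq:steiner}.
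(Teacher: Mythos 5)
Your argument is correct, and it takes a genuinely different route from the paper. The paper proves the lemma for \emph{smooth} $\Gamma$ by citing Gray's tube formula \cite[Thm.~10.31(ii)]{gray2004} as a black box, then handles the general case by a two-stage approximation: first reduce to $\C^{1,1}$ via parallel surfaces $\Gamma_\epsilon$, then approximate each $\Gamma_\epsilon$ by smooth strictly convex surfaces (Lemma~\ref{lem:smoothing}) and pass to the limit using the monotonicity of $\M$ (Lemma~\ref{lem:monotonicity}), the $\C^1$-continuity of $\G$ (Lemma~\ref{lem:GK-continuous}), and Lemma~\ref{lem:monotone-area}. You instead derive the Steiner inequality directly for $\C^{1,1}$ surfaces by a double integration: the identity $|\Gamma_t|-|\Gamma|=\int_0^t\M(\Gamma_s)\,ds$ (exactly the Stokes-plus-coarea computation the paper runs inside Lemma~\ref{lem:var-of-area}), followed by the Gray-type formula \eqref{eq:continuity} rewritten via coarea as $\M(\Gamma_s)-\M(\Gamma)=\int_0^s\int_{\Gamma_r}(2G_r-\mathrm{Ric}(\nu_r))\,d\mu_r\,dr$, discarding the nonnegative $-\mathrm{Ric}$ term, and then using monotonicity of $\G$ along parallels to bound $\G(\Gamma_r)\geq\G(\Gamma)$. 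This buys a self-contained proof that bypasses Gray's theorem entirely and eliminates the smooth-approximation step and Lemmas~\ref{lem:smoothing} and~\ref{lem:GK-continuous} (your only external inputs are the two identities from \cite{ghomi-spruck2023}, which the paper already relies on elsewhere). The paper's approach, by contrast, localizes the analytic work in a single citation and lets the approximation machinery do the rest. One small remark: it is worth noting explicitly that for $\C^{1,1}$ $\Gamma$ the limit definition of $\G(\Gamma)$ together with the cited monotonicity gives $\G(\Gamma_r)\geq\G(\Gamma)$ for $r>0$ --- you gesture at this but do not spell it out; it is the one place where you need the definition \eqref{eq:defG} rather than a raw integral, and making it explicit closes the only potential gap in the $\C^{1,1}$ step.
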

\begin{proof}
If $\Gamma$ is smooth, then \eqref{eq:steiner} holds by  Steiner's formula in spaces of nonpositive curvature \cite[Thm. 10.31(ii)]{gray2004}. The general case follows by approximation. If for every $\epsilon>0$ we can show 
$$
|\Gamma_{\epsilon+t}| \geq |\Gamma_\epsilon|+\M(\Gamma_\epsilon)t+\G(\Gamma_\epsilon)t^2,
$$
then \eqref{eq:steiner} follows by letting $\epsilon\to 0$. Hence we may assume that $\Gamma$ is $\C^{1,1}$, after replacing $\Gamma$ with $\Gamma_\epsilon$.
Then, by Lemma \ref{lem:smoothing}, there exists a family of smooth convex surfaces $\Gamma^i\subset M$ such that $\Gamma^i\to\Gamma$ with respect to $\C^1$ topology. As described in the proof of Theorem \ref{thm:minkowski2},  we may assume that $\Gamma^i$ lie outside the domain bounded by $\Gamma$, which yields  $\M(\Gamma^i)\to \M(\Gamma)$ via Lemma \ref{lem:monotonicity}. Furthermore, $\G(\Gamma^i)\to\G(\Gamma)$ as well, by Lemma \ref{lem:GK-continuous}. Finally  $|\Gamma^i|\to|\Gamma|$ and $|(\Gamma^i)_t|\to |\Gamma_t|$ by Lemma \ref{lem:monotone-area}, as shown in the proof of Theorem \ref{thm:minkowski2}. Thus, as $\Gamma^i$ satisfy \eqref{eq:steiner}, $\Gamma$ does as well.
\end{proof}

The next fact is well-known when $\Gamma$ is smooth. For the sake of completeness, we quickly check that it holds under minimal regularity:

\begin{lemma}\label{lem:var-of-area}
Let $\Gamma$ be a closed oriented $\C^{1,1}$  hypersurface embedded in a Riemannian manifold $M$, and $\Gamma_t$ be the parallel hypersurfaces of $\Gamma$ for $-\epsilon<t<\epsilon$. Then
$$
\M(\Gamma)=\frac{d}{dt}\Big|_{t=0}|\Gamma_t|.
$$
\end{lemma}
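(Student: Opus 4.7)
The plan is to realize the parallel hypersurfaces $\Gamma_t$ as the images of a single normal exponential parametrization based on $\Gamma$, and then differentiate the resulting Jacobian under the integral. Fix a continuous unit normal field $\nu$ along $\Gamma$; since $\Gamma$ is $\C^{1,1}$, $\nu$ is Lipschitz. For $\epsilon_0>0$ sufficiently small the map
\[
\Phi\colon\Gamma\times(-\epsilon_0,\epsilon_0)\to M,\qquad \Phi(p,t):=\exp_p\!\big(t\,\nu(p)\big),
\]
is a bi-Lipschitz homeomorphism onto a tubular neighborhood of $\Gamma$, and each slice $\Phi_t:=\Phi(\cdot,t)$ maps $\Gamma$ bijectively onto $\Gamma_t$ (surjectivity comes from the nearest-point projection in the tubular neighborhood, injectivity from the inverse-function theorem in the Lipschitz category). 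By the area formula for Lipschitz maps,
\[
|\Gamma_t|\;=\;\int_\Gamma J_t(p)\,d\mu(p),
\]
where $J_t(p)$ denotes the tangential Jacobian of $\Phi_t$ at $p$, defined $d\mu$-almost everywhere.

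Next I would compute the expansion of $J_t(p)$ in $t$ by solving the Jacobi equation along the normal geodesic $s\mapsto\exp_p(s\,\nu(p))$. By Rademacher's theorem, $\nu$ is differentiable at $d\mu$-a.e.\ $p\in\Gamma$, and at each such point the shape operator $A_p(X):=\nabla_X\nu$ defines a linear self-map of $T_p\Gamma$ with $\operatorname{trace}(A_p)=H(p)$. For an orthonormal basis $\{e_i\}$ of $T_p\Gamma$, the Jacobi fields $Y_i(s)$ along $\gamma_p(s):=\exp_p(s\,\nu(p))$ with $Y_i(0)=e_i$ and $Y_i'(0)=A_p(e_i)$ satisfy $J_t(p)=\sqrt{\det\langle Y_i(t),Y_j(t)\rangle}$. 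A direct Taylor expansion of these Jacobi fields, using $Y_i''=-R(\gamma_p',Y_i)\gamma_p'$, yields
\[
J_t(p)\;=\;1\,+\,H(p)\,t\,+\,R(p,t),\qquad |R(p,t)|\,\le\,C\,t^{2},
\]
with $C$ uniform in $p$ because $|A_p|$ lies in $L^\infty(\Gamma)$ (from the $\C^{1,1}$ hypothesis) and the sectional curvature of $M$ is bounded on the compact tubular neighborhood.

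The uniform remainder bound supplies a dominated-convergence majorant for the difference quotient $(J_t(p)-1)/t$, which converges $d\mu$-a.e.\ to $H(p)$. Consequently,
\[
\frac{d}{dt}\bigg|_{t=0}|\Gamma_t|\;=\;\int_\Gamma\lim_{t\to 0}\frac{J_t(p)-1}{t}\,d\mu(p)\;=\;\int_\Gamma H(p)\,d\mu(p)\;=\;\M(\Gamma).
\]
The one genuinely delicate point is securing the uniform $O(t^2)$ control on $R(p,t)$ at the minimal $\C^{1,1}$ regularity: the second fundamental form $\mathrm{II}_\Gamma$ is only essentially bounded rather than continuous, so the Jacobi-field remainder has to be estimated with the $L^\infty$ norm of $A_p$ and a universal curvature bound on the tubular neighborhood, rather than with pointwise maxima depending on $p$. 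Once this estimate is in hand, the interchange of $d/dt$ with the integral is routine dominated convergence, and the lemma follows.
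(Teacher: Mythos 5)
Your proof is correct, but it takes a genuinely different route from the paper's. You parametrize the parallel hypersurfaces by the normal exponential map $\Phi(p,t)=\exp_p(t\nu(p))$, invoke the area formula for Lipschitz maps to write $|\Gamma_t|=\int_\Gamma J_t\,d\mu$, expand the tangential Jacobian $J_t$ in $t$ via Jacobi fields, and pass the derivative through the integral by dominated convergence. The paper instead writes $H=\operatorname{div}(\nabla\widehat d_\Gamma)$ a.e.\ near $\Gamma$ (using that the signed distance function of a $\C^{1,1}$ hypersurface is itself $\C^{1,1}$ near $\Gamma$), applies Stokes' theorem and the coarea formula to get the identity $|\Gamma_t|-|\Gamma|=\int_0^t\M(\Gamma_s)\,ds$, and then differentiates using the continuity of $s\mapsto\M(\Gamma_s)$, which it imports from the integral formula \eqref{eq:continuity} established in \cite{ghomi-spruck2023}. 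Your approach is more self-contained and does not need that prior integral formula, but it must carry the technical burden of a \emph{uniform} $O(t^2)$ Jacobi remainder at $\C^{1,1}$ regularity, which you correctly isolate as the delicate step: the shape operator $A_p$ is only $L^\infty$, so the constant must come from $\|A\|_{L^\infty(\Gamma)}$ together with a curvature bound on the compact tubular neighborhood, obtained via Gronwall estimates for the Jacobi ODE. The paper's route avoids any pointwise Jacobi analysis by shifting the regularity issue to the divergence identity and to the already-proved continuity of the total mean curvature of parallels. Both arguments yield the lemma, and both cover the two-sided ($t\to 0^\pm$) derivative, as required.
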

\begin{proof}
Since $\Gamma$ is $\C^{1,1}$, the signed distance function $\widehat{d}_\Gamma$ of $\Gamma$ is $\C^{1,1}$ on an open neighborhood $U$ of $\Gamma$ in $M$ \cite[Lem. 2.6]{ghomi-spruck2022}. Thus  $H=\textup{div}(\nabla \widehat{d}_\Gamma)$ almost everywhere on $U$, where $H$ is the mean curvature of parallel hypersurfaces of $\Gamma$ in $U$. Consequently, by Stokes' theorem and the coarea formula, for $t\geq 0$,
$$
|\Gamma_t|-|\Gamma|=\int_{\Lambda_t}\textup{div}(\nabla \widehat{d}_\Gamma)=\int_{\Lambda_t}H=\int_0^t\M(\Gamma_s)ds,
$$
where $\Lambda_t$ is the domain bounded  between $\Gamma$ and $\Gamma_t$. Furthermore, by \eqref{eq:continuity}, $\M(\Gamma_s)$ is continuous for $0\leq s\leq t$ (formula \eqref{eq:continuity} holds for all pairs of parallel closed $\C^{1,1}$ hyeprsurfaces in Riemannian manifolds \cite[Thm. 3.1]{ghomi-spruck2023}). Thus, 
by the mean value theorem for integrals, the right derivative of $|\Gamma_t|$ at $t=0$ is equal to $\M(\Gamma)$. 
Similarly, the left derivative at $t=0$ is equal to $\M(\Gamma)$, which completes the proof.
\end{proof}

The last observation we need follows quickly from Lemma \ref{lem:monotone-area} and the fact that the exponential map is distance nonreducing in Cartan-Hadamard manifolds:

\begin{lemma}\label{lem:balls}
Let $\Gamma$ be a convex hypersurface in a Cartan-Hadamard $n$-manifold bounding a domain $\Omega$ with inradius $r$, and $S$ be a sphere in $\R^n$ with radius $R$. Suppose that $|\Gamma|=|S|$.  Then $r\leq R$, with equality only if $\Omega$ is isometric to a ball in $\R^n$.
\end{lemma}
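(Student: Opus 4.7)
\medskip
\noindent\textbf{Proof plan.} The plan is to inscribe a geodesic ball of the largest possible radius inside $\Omega$ and then sandwich its boundary area between the Euclidean sphere area (via the exponential map) and $|\Gamma|$ (via Lemma~\ref{lem:monotone-area}). Concretely, fix any $r'<r$, and choose $p\in\Omega$ with $B^M_{r'}(p)\subset\Omega$. Since the distance function from $p$ is convex in a Cartan-Hadamard manifold, $B^M_{r'}(p)$ is a convex set and hence $\partial B^M_{r'}(p)$ is a convex hypersurface nested inside $\Gamma$. Lemma~\ref{lem:monotone-area} therefore gives
\[
|\partial B^M_{r'}(p)|\ \leq\ |\Gamma|\ =\ |S|.
\]

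Next I would invoke the Cartan-Hadamard theorem, which says $\exp_p\colon T_pM\to M$ is a diffeomorphism, and the fact that, because $K\leq 0$, Rauch comparison implies $\exp_p$ is distance nondecreasing. In particular, $\exp_p$ carries the Euclidean sphere of radius $r'$ in $T_pM\cong\R^n$ onto $\partial B^M_{r'}(p)$, and since the differential of $\exp_p$ has Jacobian $\geq 1$ pointwise,
\[
|\partial B^M_{r'}(p)|\ \geq\ |\partial B^{\R^n}_{r'}|.
\]
Combining the two inequalities yields $|\partial B^{\R^n}_{r'}|\leq |\partial B^{\R^n}_R|$, and since the Euclidean sphere area is strictly increasing in radius, $r'\leq R$. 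Letting $r'\nearrow r$ gives $r\leq R$.

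For the equality case $r=R$, compactness of $\ol\Omega$ shows the inradius is attained at some $p\in\Omega$ with $B^M_r(p)\subset\Omega$. Both sandwich inequalities above must then be equalities. Equality in Lemma~\ref{lem:monotone-area} forces $\partial B^M_r(p)=\Gamma$, hence $\Omega=B^M_r(p)$. Equality in the exponential map comparison means $\exp_p\colon \ol B^{\R^n}_r\to \ol B^M_r(p)$ is a distance-nondecreasing diffeomorphism that preserves the $(n-1)$-area of each sphere of radius $\leq r$; combined with the Gauss lemma (radial distances are preserved), this forces the Jacobian of $\exp_p$ to be identically $1$ on $B^R_r$, and hence (via the Jacobi field equation in nonpositive curvature) the sectional curvature vanishes on $B^M_r(p)$, so $\exp_p$ is an isometry and $\Omega$ is isometric to a Euclidean ball.

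The routine comparison arguments are straightforward; the only nontrivial step is the rigidity in the exponential map inequality, where one must convert equality of areas into vanishing curvature via Jacobi fields, but this is exactly the standard Bishop-type rigidity statement for $K\leq 0$.
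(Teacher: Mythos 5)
Your proof is correct and takes exactly the route the paper indicates. The paper does not actually give a proof of Lemma~\ref{lem:balls} — it only remarks, in the sentence preceding the statement, that the lemma ``follows quickly from Lemma~\ref{lem:monotone-area} and the fact that the exponential map is distance nonreducing in Cartan-Hadamard manifolds'' — and your write-up fleshes out precisely that one-line sketch, correctly supplying the sandwich $|\partial B^{\R^n}_{r'}|\le|\partial B^M_{r'}(p)|\le|\Gamma|=|S|$ and the Jacobi-field/Bishop-type rigidity needed for the equality case.
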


Now we are ready to establish the main result of this section:

\begin{proof}[Proof of Theorem \ref{thm:isoperimetric}]
Let  $S\subset\R^3$ be a sphere with $|S|=|\Gamma|$ and radius $R$. By Lemma \ref{lem:balls}, 
$R\geq r:=\textup{inrad}(\Omega)$. Let $\Gamma_{-t}$, $S_{-t}$ denote respectively the inner parallel surfaces of $\Gamma$,  $S$ at distance $t\in (0,r)$, as defined in the last section.   It is enough to show that $|\Gamma_{-t}|\leq |S_{-t}|$, for then by the coarea formula
\be\label{eq:Omega-A}
|\Omega|=\int_0^r|\Gamma_{-t}|dt \leq \int_0^r|S_{-t}|dt=|\Lambda|,
\ee
where $\Lambda$ is the annular region between $S$ and $S_{-r}$. The application of the coarea formula here is warranted since 
the distance function $\widehat{d}_\Gamma$ is Lipschitz and $|\nabla \widehat{d}_\Gamma|=1$ almost everywhere \cite[Sec. 2]{ghomi-spruck2022}. Furthermore $|\Lambda|$ is the desired upper bound, since $R=\sqrt{|\Gamma|/(4\pi)}$.   
Now suppose, towards a contradiction, that  
$$
|\Gamma_{-t_0}|> |S_{-t_0}|
$$
 for some $t_0\in(0,r)$.  Since  $\Gamma$ is $d$-convex,  $\Gamma_{- t}$ are convex.
So, by Lemmas  \ref{lem:monotone-area} and \ref{lem:steiner},
$$
|\Gamma| \geq |(\Gamma_{-t_0})_{t_0}| \geq |\Gamma_{-t_0}|+\M(\Gamma_{-t_0})t_0+\G(\Gamma_{-t_0})t_0^2.
$$
There exists $s_0> 0$ such that $|\Gamma_{-t_0}|=|S_{-t_0+s_0}|$. By Theorem \ref{thm:minkowski2},
$$
\M(\Gamma_{-t_0})\geq \M(S_{-t_0+s_0}) > \M(S_{-t_0}).
$$
Furthermore, recall that by Gauss' equation and Gauss-Bonnet theorem \eqref{eq:GB},
$
\G((\Gamma_{-t_0})_s)\geq 4\pi= \G(S_{-t_0}),
$
for all $s>0$. So 
$$
\G(\Gamma_{-t_0})\geq \G(S_{-t_0}),
$$
by definition \eqref{eq:defG}.
 Hence we obtain
$$
|\Gamma|> |S_{-t_0}|+\M(S_{-t_0})t_0+\G(S_{-t_0})t_0^2 = |(S_{-t_0})_{t_0}|=|S|=|\Gamma|,
$$
which is the desired contradiction. 
Finally, suppose that equality holds in \eqref{eq:bonnesen}. Then equality holds in \eqref{eq:Omega-A}. Consequently $|\Gamma_{-t}|= |S_{-t}|$, since we just showed that $|\Gamma_{-t}|\leq |S_{-t}|$. It follows, via Lemma \ref{lem:var-of-area}, that 
$$
\M(\Gamma)=\frac{d}{dt}|\Gamma_{t}|\Big|_{t=0}=-\frac{d}{dt}|\Gamma_{-t}|\Big|_{t=0}=-\frac{d}{dt}|S_{-t}|\Big|_{t=0}=\M(S).
$$
So, by Theorem \ref{thm:minkowski1}, $\Gamma$ must bound a Euclidean ball.
\end{proof}
\section{Higher Dimensions}\label{sec:high-dim}
Let $\Gamma$ be a convex hypersurface in a Cartan-Hadamard $n$-manifold $M$ bounding a domain $\Omega$, and $S$ be a sphere  in $\R^n$ with $|S|=|\Gamma|$ bounding a ball $B$.
The analogue of Minkowski inequality \eqref{eq:minkowski0} in  $M$ is that 
\be\label{eq:gen-minkowski}
\M(\Gamma)\geq \M(S),
\ee
with equality only if $\Omega$ is isometric to $B$. The analogue of the  isoperimetric inequality \eqref{eq:bonnesen}  is that, if   $\Gamma$ is $d$-convex, $r:=\textup{inrad}(\Omega)$, and $R$ is the radius of $B$, then
\be\label{eq:gen-bonnesen}
|\Omega|\leq |B\setminus B_{R-r}|,
\ee
where  $B_\rho$ stands for a ball of radius $\rho$ in $\R^n$ with the same center as $B$, and equality holds only if $\Omega$ is isometric to $B$. By Lemma \ref{lem:balls}, $r\leq R$, thus $B_{R-r}$ is well-defined.

\begin{theorem}\label{thm:higher}
Let $M$ be a Cartan-Hadamard $n$-manifold. Suppose that  the Minkowski type inequality \eqref{eq:gen-minkowski} holds for all $\C^{1,1}$ convex hypersurfaces $\Gamma\subset M$ with equality only if the domain $\Omega$ bounded by $\Gamma$ is isometric to a ball in $\R^n$. Then the isoperimetric inequality \eqref{eq:gen-bonnesen} also holds in $M$ for all domains $\Omega$ with $\C^{1,1}$ $d$-convex boundary $\Gamma$, and equality holds only if $\Omega$ is isometric to a ball in $\R^n$. 
\end{theorem}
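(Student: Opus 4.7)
The plan is to mimic the proof of Theorem~\ref{thm:isoperimetric} at the outer level, replacing its Steiner-plus-Gauss-Bonnet step, which depends on second-order curvature information via $\G$, by a scalar ODE comparison that uses only the first-order information of the Minkowski inequality. Let $S\subset\R^n$ be a sphere of radius $R$ with $|S|=|\Gamma|$, bounding a ball $B$; by Lemma~\ref{lem:balls}, $R\geq r:=\textup{inrad}(\Omega)$. Set $A(t):=|\Gamma_{-t}|$ and $A_*(t):=|S_{-t}|$ for $t\in[0,r]$, and let $\mathcal{F}$ denote the total mean curvature of a Euclidean sphere as a function of its area, so that $\mathcal{F}$ is smooth and strictly increasing on $(0,\infty)$ and $A_*$ is the exact solution of $A_*'=-\mathcal{F}(A_*)$ with $A_*(0)=|\Gamma|$. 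Once the pointwise bound $A\leq A_*$ on $[0,r]$ is proved, the coarea formula yields $|\Omega|=\int_0^r A\,dt\leq\int_0^r A_*\,dt=|B\setminus B_{R-r}|$, exactly as in Theorem~\ref{thm:isoperimetric}.

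Two preparatory facts are needed. First, I would show that $A$ is absolutely continuous on $[0,r]$ with $A'(t)=-\M(\Gamma_{-t})$ almost everywhere by repeating the proof of Lemma~\ref{lem:var-of-area}, applying the divergence theorem and coarea formula to $\mathrm{div}(\nabla\widehat{d}_\Gamma)$ over slabs $\{-t_2\leq\widehat{d}_\Gamma\leq -t_1\}\subset\Omega$, using that $\widehat{d}_\Gamma$ is Lipschitz with $|\nabla\widehat{d}_\Gamma|=1$ almost everywhere and semiconcave by $d$-convexity. Second, the Minkowski hypothesis \eqref{eq:gen-minkowski} must be applied to each $\Gamma_{-t}$, which is convex but possibly not $\C^{1,1}$. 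This I would handle exactly as in the proof of Theorem~\ref{thm:minkowski2}, applying \eqref{eq:gen-minkowski} to the $\C^{1,1}$ outer parallels $(\Gamma_{-t})_s$ and passing to $s\to 0^+$ via Lemma~\ref{lem:MGtToMG}. Together these yield $\M(\Gamma_{-t})\geq\mathcal{F}(A(t))$ and hence the differential inequality $A'(t)\leq -\mathcal{F}(A(t))$ almost everywhere.

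The comparison then closes by contradiction. If $A(t^*)>A_*(t^*)$ for some $t^*\in(0,r)$, set $t_0:=\sup\{t\in[0,t^*]:A(t)=A_*(t)\}$, which is well-defined since $A(0)=A_*(0)$. Continuity gives $A(t_0)=A_*(t_0)$, and since $A-A_*$ is continuous and nonvanishing on $(t_0,t^*]$, the intermediate value theorem forces $A>A_*$ throughout $(t_0,t^*]$. Strict monotonicity of $\mathcal{F}$ then gives $\mathcal{F}(A)>\mathcal{F}(A_*)$ on $(t_0,t^*]$, so integrating the almost-everywhere inequality $A'-A_*'\leq\mathcal{F}(A_*)-\mathcal{F}(A)<0$ from $t_0$ to $t^*$ produces $A(t^*)-A_*(t^*)<0$, a contradiction. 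For the equality case, equality in \eqref{eq:gen-bonnesen} forces $A\equiv A_*$, so $\M(\Gamma)=-A'(0^+)=-A_*'(0)=\M(S)$ attains equality in \eqref{eq:gen-minkowski}, and the assumed rigidity identifies $\Omega$ with a Euclidean ball. The main obstacle is the first preparatory fact: establishing absolute continuity of $A$ with the correct almost-everywhere derivative despite possible loss of $\C^{1,1}$ regularity of the inner parallels $\Gamma_{-t}$ before the medial axis; the semiconcavity of $\widehat{d}_\Gamma$ granted by $d$-convexity should carry this through.
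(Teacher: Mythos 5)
Your overall strategy---turning the Bonnesen inequality into an ODE comparison for $A(t)=|\Gamma_{-t}|$ against the spherical profile $A_*$---is sound and, once the key differential inequality is in place, the continuation-by-contradiction argument and the equality analysis are both correct (for $n\ge 3$, where $\mathcal{F}(A)=c_n A^{(n-2)/(n-1)}$ is strictly increasing). However, the claim on which everything rests, that $A$ is absolutely continuous with $A'(t)=-\M(\Gamma_{-t})$ almost everywhere, is \emph{false}, and the proposed mechanism for proving it (divergence theorem applied to $\mathrm{div}(\nabla\widehat d_\Gamma)$ together with semiconcavity) is exactly where it breaks. Once $t$ exceeds $\mathrm{reach}(\Gamma)$, the distributional Laplacian of $\widehat d_\Gamma$ on $\Omega$ acquires a singular part supported on the medial axis, so the divergence-theorem identity picks up an extra term beyond $\int_{t_1}^{t_2}\M(\Gamma_{-s})\,ds$. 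Concretely (in $\R^3$, taking $\Gamma$ an outer parallel of a cube so that the inner parallels eventually become cubes of side $\sigma(t)$): $A(t)=6\sigma(t)^2$ gives $A'(t)=-24\,\sigma(t)$, whereas $\M(\Gamma_{-t})=6\pi\,\sigma(t)$, so $A'(t)\neq -\M(\Gamma_{-t})$. Semiconcavity is not the remedy here; it is precisely the source of the defect, because it tells you the singular measure has a definite sign rather than that it vanishes.

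The good news is that this sign is favorable: the correct statement is the one-sided bound $A'(t)\le -\M(\Gamma_{-t})$ wherever $A'(t)$ exists, and this is all the comparison needs. Rather than arguing through $\mathrm{div}(\nabla\widehat d_\Gamma)$, one gets it cleanly from nesting: for $0\le s\le t$ the outer parallel $(\Gamma_{-t})_s$ is nested inside $\Gamma_{-(t-s)}$, so Lemma \ref{lem:monotone-area} gives $|(\Gamma_{-t})_s|\le A(t-s)$; dividing by $s$ and letting $s\to 0^+$ (where $\tfrac{d}{ds}|(\Gamma_{-t})_s|\to\M(\Gamma_{-t})$ via Lemmas \ref{lem:var-of-area} and \ref{lem:MGtToMG}) yields the Dini bound, hence $A'(t)\le-\M(\Gamma_{-t})$ a.e. You also do not need absolute continuity of $A$: monotonicity plus continuity gives $A(t^*)-A(t_0)\le\int_{t_0}^{t^*}A'$, which is the direction you want. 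With these two corrections your proof closes. For comparison, the paper avoids differentiating $A$ at all: it runs a supremum/continuation argument directly on the areas $|(\Gamma_{-t_0})_s|$ versus $|S_{-t_0+s}|$, uses the rigidity of \eqref{eq:gen-minkowski} at the critical time, and handles the post-reach regime via Lemma \ref{lem:reach}. That is the same continuation idea, packaged to sidestep the a.e.\ derivative question; your ODE formulation is an attractive alternative once the derivative inequality is stated correctly.
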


The proof of the above theorem uses the notion of reach in the sense of Federer \cite{federer1959,thale2008}, see \cite[Sec. 2]{ghomi-spruck2022}. The \emph{reach} of a convex hypersurface $\Gamma\subset M$, bounding a domain $\Omega$, is the supremum value of $\rho$ such that  through each point of $\Gamma$ there passes a  ball of radius $\rho$ contained in $\Omega$. It is well-known that $\textup{reach}(\Gamma)>0$ if and only if $\Gamma$ is $\C^{1,1}$ \cite[Lem. 2.6]{ghomi-spruck2022}. Lemma \ref{lem:monotone-area} quickly yields:

\begin{lemma}\label{lem:reach}
Let $\Gamma$ be a $d$-convex hypersurface in a Cartan-Hadamard manifold bounding a domain $\Omega$, and $t\in[0,\textup{inrad}(\Omega))$. Then $|(\Gamma_{-t})_t|\leq|\Gamma|$ with equality if and only if $t\leq \textup{reach}(\Gamma)$.
\end{lemma}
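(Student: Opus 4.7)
The plan is to combine the monotonicity of area under nesting (Lemma \ref{lem:monotone-area}) with a characterization of reach via inscribed balls. The key observation is that $(\Gamma_{-t})_t$ bounds a convex domain $\tilde\Omega$ nested inside $\bar\Omega$, which yields the inequality and reduces the equality statement to $(\Gamma_{-t})_t=\Gamma$.

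First, since $\Gamma$ is $d$-convex, the closed set $\bar\Omega_{-t}=\{x\in\bar\Omega:d(x,\Gamma)\geq t\}$ is convex with boundary $\Gamma_{-t}$, so the outer parallel $(\Gamma_{-t})_t$ is a $\C^{1,1}$ convex hypersurface bounding the closed $t$-neighborhood $\tilde\Omega$ of $\bar\Omega_{-t}$. To see that $\tilde\Omega\subseteq\bar\Omega$: any $x\in\tilde\Omega$ has some $y\in\bar\Omega_{-t}$ with $d(x,y)\leq t$, and if $x\notin\bar\Omega$ the geodesic from $y$ to $x$ would cross $\Gamma$ at a point $z$ with $d(y,z)\leq d(y,x)\leq t$; combined with $d(y,\Gamma)\geq t$ this forces $d(y,z)=d(y,x)=t$, giving $z=x\in\Gamma$, a contradiction. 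Lemma \ref{lem:monotone-area} then yields $|(\Gamma_{-t})_t|\leq|\Gamma|$, with equality precisely when $(\Gamma_{-t})_t=\Gamma$.

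Next I would identify $(\Gamma_{-t})_t=\Gamma$ with the condition $t\leq\textup{reach}(\Gamma)$. If $t\leq\textup{reach}(\Gamma)$, then at each $p\in\Gamma$ one can inscribe a closed ball $\bar B(y_p,t)\subset\bar\Omega$ passing through $p$ (using a limit of centers through radii $\rho\nearrow t$ when $t$ equals the reach). The center satisfies $y_p\in\bar\Omega_{-t}$ and $d(p,y_p)=t$, while every $q\in\bar\Omega_{-t}$ obeys $d(p,q)\geq d(q,\Gamma)\geq t$ since $p\in\Gamma$; hence $d(p,\bar\Omega_{-t})=t$, so $p\in(\Gamma_{-t})_t$. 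Thus $\Gamma\subseteq(\Gamma_{-t})_t$, and since both are connected closed hypersurfaces of $M$ with one contained in the other, they coincide. Conversely, if $(\Gamma_{-t})_t=\Gamma$, each $p\in\Gamma$ admits a minimizer $y_p\in\bar\Omega_{-t}$ with $d(p,y_p)=t$; the open ball $B(y_p,t)$ is disjoint from $\Gamma$ (as $d(y_p,\Gamma)\geq t$) and, being connected with $y_p\in\Omega$, lies in $\Omega$ while passing through $p$, verifying $\textup{reach}(\Gamma)\geq t$.

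The main delicacy I anticipate is the boundary case $t=\textup{reach}(\Gamma)$ in the forward direction, since the supremum defining reach need not be attained; a compactness argument selecting a limit of centers of inscribed balls of radius $\rho\nearrow t$ handles this. Otherwise the proof is a clean combination of the distance characterization of $\bar\Omega_{-t}$ with Lemma \ref{lem:monotone-area}.
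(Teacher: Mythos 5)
Your proof is correct, and since the paper omits its own proof (stating only that ``Lemma~\ref{lem:monotone-area} quickly yields'' the result), your argument fills in precisely the intended details: use $d$-convexity to see that $\overline{\Omega}_{-t}$ is convex, note that its $t$-neighborhood is a convex body nested inside $\overline{\Omega}$, apply Lemma~\ref{lem:monotone-area} to get the inequality and to reduce the equality case to $(\Gamma_{-t})_t=\Gamma$, and finally translate that identity into the rolling-ball characterization of $\mathrm{reach}(\Gamma)$. The compactness step you flag for the boundary case $t=\mathrm{reach}(\Gamma)$ is handled correctly; one could streamline the nesting argument slightly (a point $z\in\Gamma$ strictly between $y$ and $x$ on the geodesic already forces $d(y,z)<t\le d(y,\Gamma)$, a contradiction without invoking $z=x$), but this is cosmetic.
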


Now we are ready to establish the main result of this section:

\begin{proof}[Proof of Theorem \ref{thm:higher}]
 If $\Omega$ is isometric to $B$, then equality holds in \eqref{eq:gen-bonnesen} and there is nothing left to prove. 
 So suppose  that $\Omega$ is not isometric to $B$.
Then, by the rigidity assumption for Minkowski's inequality \eqref{eq:gen-minkowski}, $\M(\Gamma)>\M(S)$. Consequently, since $\Gamma$ is $\C^{1,1}$, there exists $\epsilon>0$ such that
\be\label{eq:Gamma-t-S-t}
|\Gamma_{-t}|<|S_{-t}|, \quad\quad \text{for}\quad\quad t\in(0,\epsilon],
\ee
by Lemma \ref{lem:var-of-area}.
Furthermore note that, by Lemma \ref{lem:balls}, 
\be\label{eq:r-R}
r<R.
\ee 
So $S_{-t}$ is well-defined for $t\in (0,r)$. 
If $|\Gamma_{-t}|\leq |S_{-t}|$ for all $t\in [\epsilon,r)$, then by the coarea formula
$$
|\Omega|=\int_0^r|\Gamma_{-t}|\,dt<\int_0^r|S_{-t}|\,dt=|B\setminus B_{R-r}|,
$$
and we are done.
So suppose, towards a contradiction, that
$
|\Gamma_{-t_0}|> |S_{-t_0}|,
$
for some $t_0\in [\epsilon,r)$. Let
$$
\ol{s}:=\sup\Big\{\,s\leq t_0\;\,\Big|\;\;(\Gamma_{-t_0})_t|\geq|S_{-t_0+t}| \;\text{   for all   }\; t\in[0,s]\,\Big\}.
$$
Then $\ol s>0$, and 
\be\label{eq:Gamma-t0-s}
|(\Gamma_{-t_0})_{\ol s}|=|S_{-t_0+\ol{s}}|.
\ee
Note that $(\Gamma_{-t_0})_{\ol s}$ cannot bound a Euclidean ball, for otherwise the radius of $(\Gamma_{-t_0})_{\ol s}$ would  be equal  to that  of $S_{-t_0+\ol{s}}$, i.e., $r-t_0+s=R-t_0+s$, or $r=R$, which would violate \eqref{eq:r-R}.
Consequently, by the rigidity assumption for \eqref{eq:gen-minkowski},
\be\label{eq:Gamma-t0-s-2}
\M(\Gamma_{-t_0})_{\ol s}>\M(S_{-t_0+\ol{s}}).
\ee
But since $\ol s>0$, $(\Gamma_{-t_0})_{\ol s}$ is $\C^{1,1}$. 
So  \eqref{eq:Gamma-t0-s} and \eqref{eq:Gamma-t0-s-2} yield that
$|(\Gamma_{-t_0})_{{\ol s}+\delta}|>|S_{-t_0+\ol{s}+\delta}|$ for $\delta$ small, via Lemma \ref{lem:var-of-area}.
Hence
$$
\ol s=t_0,
$$
by the definition of $\ol s$. There are now two possibilities: either  $t_0>\textup{reach}(\Gamma)$, or $t_0\leq\textup{reach}(\Gamma)$. If $t_0>\textup{reach}(\Gamma)$, then by Lemma \ref{lem:reach},
$$
|(\Gamma_{t_0})_{\ol s}|<|\Gamma_{-t_0+\ol s}|=|\Gamma|=|S|=|S_{-t_0+\ol s}|,
$$
which is not possible by \eqref{eq:Gamma-t0-s}. If, on the other hand, $t_0\leq\textup{reach}(\Gamma)$, then, again by Lemma \ref{lem:reach} and the definition of $\ol s$,
$$
|\Gamma_{-t_0+t}|=|(\Gamma_{-t_0})_t|\geq |S_{-t_0+t}|,
$$
for all $t\in [0,t_0]$, which violates \eqref{eq:Gamma-t-S-t}, for $t$ close to $t_0$. So we arrive at the desired contradiction.
\end{proof}

\addtocontents{toc}{\protect\setcounter{tocdepth}{0}}
\section*{Acknowledgments}
We thank Igor Belegradek, Ramon van Handel, Yingxiang Hu, Haizhong Li, Emanuel Milman, and Changwei Xiong for useful and stimulating communications.

\addtocontents{toc}{\protect\setcounter{tocdepth}{1}}
\bibliography{references}

\end{document}